\numberwithin{equation}{section}
\newcommand{\be}{\begin{eqnarray}}
\newcommand{\mE}{\end{eqnarray}}
\newcommand{\ce}{\begin{eqnarray*}}
\newcommand{\de}{\end{eqnarray*}}
\newtheorem{theorem}{Theorem}[section]
\newtheorem{lemma}[theorem]{Lemma}
\newtheorem{remark}[theorem]{Remark}
\newtheorem{definition}[theorem]{Definition}
\newtheorem{proposition}[theorem]{Proposition}
\newtheorem{example}[theorem]{Example}
\newtheorem{corollary}[theorem]{Corollary}
\def\eps{\varepsilon}
\def\p{\partial}
\def\[{{\Big[}}
\def\]{{\Big]}}
\def\<{{\langle}}
\def\>{{\rangle}}
\def\({{\Big(}}
\def\){{\Big)}}
\def\bx{{\mathbf{x}}}
\def\dif{{\mathord{{\rm d}}}}
\def\no{\nonumber}
\def\={&\!\!=\!\!&}
\def\bt{\begin{theorem}}
\def\et{\end{theorem}}
\def\bl{\begin{lemma}}
\def\el{\end{lemma}}
\def\br{\begin{remark}}
\def\er{\end{remark}}
\def\bd{\begin{definition}}
\def\ed{\end{definition}}
\def\bp{\begin{proposition}}
\def\ep{\end{proposition}}
\def\bc{\begin{corollary}}
\def\ec{\end{corollary}}
\def\bx{\begin{example}}
\def\ex{\end{example}}
\def\cA{{\mathcal A}}
\def\cD{{\mathcal D}}
\def\cF{{\mathcal F}}
\def\cG{{\mathcal G}}
\def\cH{{\mathcal H}}
\def\cL{{\mathcal L}}
\def\cZ{{\mathcal Z}}
\def\mC{{\mathbb C}}
\def\mE{{\mathbb E}}
\def\mN{{\mathbb N}}
\def\mP{{\mathbb P}}
\def\mR{{\mathbb R}}
\def\sI{{\mathscr I}}
\def\sJ{{\mathscr J}}
\def\sL{{\mathscr L}}
\def\sN{{\mathscr N}}
\def\sQ{{\mathscr Q}}
\def\sT{{\mathscr T}}
\def\sU{{\mathscr U}}
\def\sW{{\mathscr W}}
\def\sY{{\mathscr Y}}
\def\geq{\geqslant}
\def\leq{\leqslant}
\begin{document}

\title[Multi-scale stochastic hyperbolic-parabolic equations]{Averaging principle and normal deviations for multi-scale stochastic hyperbolic-parabolic equations}


\author{Michael R\"ockner}
\address{
	Fakult\"{a}t f\"{u}r Mathematik, Universit\"{a}t Bielefeld, D-33501 Bielefeld, Germany, and Academy of Mathematics and Systems Science,
	Chinese Academy of Sciences (CAS), Beijing, 100190, P.R.China}
\curraddr{}
\email{roeckner@math.uni-bielefeld.de}
\thanks{}

\author{Longjie Xie}
\address{School of Mathematics and Statistics $\&$ Research Institute of Mathematical Science, Jiangsu Normal University,
	Xuzhou, Jiangsu 221000, P.R.China}
\curraddr{}
\email{longjiexie@jsnu.edu.cn}
\thanks{This work is supported  by the DFG through CRC 1283 and NSFC (No. 12071186, 11931004).}

\author{Li Yang}
\address{Department of Mathematics, Shandong University,
	Jinan, Shandong 250100, P.R.China}
\curraddr{}
\email{llyang@mail.sdu.edu.cn}
\thanks{}

\subjclass[2020]{  60H15, 60F05, 70K70}
\keywords{Stochastic hyperbolic-parabolic equations; averaging principle; strong and weak convergence; homogenization}

\date{}

\dedicatory{}

\begin{abstract}
We  study the asymptotic behavior of stochastic hyperbolic parabolic equations with slow and fast time scales.  Both the  strong and weak convergence in the averaging principe are established, which can be viewed as a functional law of large numbers. Then we  study the stochastic fluctuations of the original system around its averaged equation. We show that the normalized difference converges weakly to the solution of a linear stochastic wave equation, which is a form of functional central limit theorem. We provide a unified proof for the above convergence by using the Poisson equation in Hilbert spaces.  Moreover, sharp rates of convergence  are obtained, which are shown not to depend on the regularity of the coefficients in the equation for the fast variable.
\end{abstract}

\maketitle
\section{Introduction}

Let $T>0$ and $D\subseteq\mR^d\;(d\geq 1)$ be a bounded open set. Consider the following system of  stochastic hyperbolic-parabolic equations:
\begin{equation} \label{spde01}
\left\{ \begin{aligned}
&\frac{\partial^2U^{\eps}_t(\xi)}{\partial t^2}=\Delta U^{\eps}_t(\xi)+f(U^{\eps}_t(\xi), Y^{\eps}_t(\xi))+\dot W^1_t(\xi),\qquad\,\,\, (t,\xi)\in(0,T]\times D,\\
&\frac{\partial Y^{\eps}_t(\xi)}{\partial t} =\frac{1}{\eps}\Delta Y^{\eps}_t(\xi)+\frac{1}{\eps}g(U^{\eps}_t(\xi), Y^{\eps}_t(\xi))\!+\!\frac{1}{\sqrt{\eps}} \dot W_t^2(\xi),\,\, (t,\xi)\in(0,T]\times D,\\
&U^{\eps}_t(\xi)=Y^{\eps}_t(\xi)=0,\quad\quad\qquad\quad\qquad\quad\qquad\quad\qquad\quad\,\, (t,\xi)\in(0,T]\times\p D,\\
&U^{\eps}_0(\xi)=u(\xi),\,\,\frac{\partial U^{\eps}_t(\xi)}{\partial t}\big|_{t=0}=v(\xi),\,\,Y^{\eps}_0(\xi)=y(\xi),\quad\,\,\quad\quad\quad\quad\qquad\xi\in D,
\end{aligned} \right.
\end{equation}
where  $\Delta$ is the Laplacian operator, $\p D$ denotes the boundary of the domain $D$, $f,g :\mR^2\to\mR$ are measurable functions, $W^1_t$ and $W^2_t$ are two mutually independent $Q_1$- and $Q_2$-Wiener processes both defined on a complete probability space $(\Omega,\mathscr{F},\{\mathscr{F}_t\}_{t\geq0},\mP)$, and the small parameter $0<\eps\ll 1$ represents the  separation  of  time scales  between   the `slow' process  $U_t^\eps$ and the `fast' motion $Y_t^\eps$ (with time order $1/\eps$).
Randomly perturbed hyperbolic partial differential equations are usually used to model wave propagation and mechanical vibration in a random medium, see e.g. \cite{BD,BDT,DKMNX}. If these phenomena are temperature dependent or heat generating, then the underlying hyperbolic equation will be coupled with a stochastic parabolic equation, which leads to the mathematical description of slow-fast systems through (\ref{spde01}), see e.g. \cite{CPL1,Le,RR,ZZ}  and the references therein. In this respect, the question that how a thermal environment at large time scales may influence the dynamics of the whole system arises.

\vspace{1mm}
In the mathematical literature,  powerful averaging and homogenization methods have been developed to study the asymptotic behavior of multi-scale systems  as  $\eps\to 0$.
The averaging principle  can be viewed as a functional law of large numbers, which says the slow component will converge to the solution of the so-called averaged equation as  $\eps\to 0$. The averaged equation then captures the evolution of the original system over a long time scale, which does not depend on the fast variable any more and thus is much simpler. This theory  was first studied  by Bogoliubov \cite{BM} for deterministic ordinary differential equations, and extended to stochastic differential equations (SDEs for short)  by Khasminskii \cite{K1}, see also \cite{BK,GR,HLi,KY,V0} and the references therein. As a rule, the averaging method requires certain smoothness on both the original and
the averaged coefficients of the systems. Various assumptions have been studied in order to guarantee
the above convergence.
Recently, the averaging principle for two time scale stochastic partial differential equations (SPDEs for short) has attracted considerable attention.
In \cite{CF}, Cerrai and Freidlin proved the averaging principle for slow-fast stochastic reaction-diffusion equations  with noise only in the fast motion. Later, Cerrai \cite{Ce,C2} generalized this result to more general reaction-diffusion equations, see also \cite{BYY,CL,RXY,WR} and the references therein for further developments.
We also mention that  Br\'ehier \cite{Br1,Br2}  studied the rate of convergence in terms of $\eps\to 0$ in the averaging principle for parabolic SPDEs and obtained the $1/2$-order rate of strong convergence (in the mean-square sense) and the $1$-order rate of weak convergence (in the distribution sense), which are known to be optimal. These rates of convergence are  important for the study of other limit theorems in probability theory and numerical schemes, known as the Heterogeneous Multi-scale Method for the original multi-scale system, see e.g. \cite{Br3,EL}.
Concerning stochastic hyperbolic-parabolic equations,  Fu ect. \cite{FWLL1} established the strong convergence in the averaging principle for system (\ref{spde01}) when $d=1$ by the classical Khasminskii time discretization method,  and obtained the $1/4$-order rate of strong convergence. In \cite{FWLL}, by using asymptotic expansion arguments, the authors studied  the weak order convergence for system (\ref{spde01}), but only in a not fully coupled case  ($g(u,y)=g(y)$), i.e., the fast equation does not depend on the slow process.

\vspace{1mm}

In this paper, we shall first prove the strong and weak convergence in the averaging principle for the fully coupled system (\ref{spde01}) with singular coefficients, see {\bf Theorem \ref{main1}}.  Compared with \cite{FWLL,FWLL1}, we assume that the coefficients are only $\eta$-H\"older continuous  with respect to the fast variable with any $\eta>0$, and we obtain the optimal   $1/2$-order rate of strong convergence as well as the  $1$-order rate of weak convergence. Moreover, we find that both the strong and weak convergence rates do not  depend on the regularity of the coefficients in the equation for the fast variable. This implies that the evolution of the multi-scale system (\ref{spde01}) relies mainly on the slow variable, which coincides with the intuition since in the limit equation the fast component has been totally averaged out. Furthermore, the arguments we use  are different from those in \cite{Br1,Ce,C2,CF,FWLL,FWLL1}. Our method to establish the strong and weak convergence is based on the Poisson equation in Hilbert space, which is more unifying and much simpler.

   \vspace{1mm}

The averaged equation for (\ref{spde01})  is only valid  in the limit when the time scale separation between the fast and slow variables is infinitely wide. Of course, the scale separation is never infinite in reality.  For small but positive $\eps,$ the slow variable $U_t^\eps$ will experience fluctuations around its averaged motion $\bar U_t$. These small fluctuations can be captured by  studying the  functional central limit theorem. Namely, we are interested in the asymptotic behavior of the normalized difference
\begin{align}\label{zte}
Z_{t}^{\eps}:=\frac{U_t^\eps-\bar U_t}{\sqrt{\eps}}
\end{align}
as $\eps$ tends to 0. Such result is known to be closely related to the homogenization behavior of singularly perturbed partial differential equations, which is of its own interest, see e.g.  \cite{HP,HP2}.
For the study of the functional central limit theorem  for finite dimensional multi-scale systems, we refer the reader to the fundamental paper by Khasminskii \cite{K1},  see also \cite{BK,CFKM,KM,Li,P-V,P-V2,RX}. The infinite dimensional situation is more open and papers on this subject are very few. In \cite{Ce2}, Cerrai studied the normal deviations for a deterministic reaction-diffusion equation with one dimensional space variable perturbed by a fast process, and proved the weak convergence to a Gaussian process, whose covariance is explicitly described. Later, this was generalized to general stochastic reaction-diffusion equations by Wang and Roberts \cite{WR}. In both papers, the methods of proof are based on Khasminskii's time discretization argument. Recently, we  \cite{RXY} studied the normal deviations for general slow-fast parabolic SPDEs by using the technique of Poisson equation.

\vspace{1mm}

In this paper, we further develop the argument used in \cite{RXY} to study the functional central limit theorem for the  stochastic hyperbolic-parabolic system (\ref{spde01}) with H\"older continuous coefficients. More precisely, we  show that  the normalized difference $Z_t^\eps$, defined by (\ref{zte}), converges weakly as $\eps \to 0$ to the solution of  a linear stochastic wave equation, see {\bf Theorem \ref{main3}}. Moreover, the optimal $1/2$-order rate of convergence is obtained. This rate also   does not depend on the regularity of the coefficients in the equation for the fast variable, which again is natural   since in the limit equation the fast component has been homogenized out. As far as we know, the result we obtained   is completely new. The argument we use to prove the functional central limit theorem is closely and universally connected with the proof of the strong and weak convergence in the averaging principle. We note that due to the model considered in this paper, the framework we deal with is different from \cite{RXY}. Furthermore, we derive the higher order spatial-temporal convergence in the averaging principle and  in the functional central limit theorem. Throughout our proof, several strong and weak fluctuation estimates will play an important role, see Lemmas \ref{strf}, \ref{wef1} and \ref{wfe2} below.

\vspace{1mm}
The rest of this paper is organized as follows. In Section 2, we first introduce some assumptions and state our main results. Section 3 is devoted to establish some preliminary estimates. Then we prove the strong and weak convergence results, Theorem \ref{main1}, and the normal deviation result, Theorem \ref{main3}, in Section 4 and Section 5, respectively.

\vspace{1mm}
{\bf Notations.}
To end this section, we introduce some usual notations for convenience. Given Hilbert spaces $H_1, H_2$ and $\hat H,$ we use $\sL(H_1,H_2)$ to denote the space of all linear and bounded operators from $H_1$ to $H_2$. If $H_1=H_2,$ we write $\sL(H_1)=\sL(H_1,H_1)$  for simplicity. Recall that an operator $Q\in\sL(\hat H)$ is called Hilbert-Schmidt if
$$
\Vert Q\Vert_{\mathscr{L}_2(\hat H)}^2:=Tr(QQ^{*})<+\infty.
$$
We shall denote the space of all Hilbert-Schmidt operators on $\hat H$ by $\mathscr{L}_2(\hat H)$.
Let $L^\infty_{\ell}(H_1\times H_2,\hat H)$ denote the space of  all measurable maps $\phi: H_1\times H_2\to \hat H$ with linear growth, i.e.,
$$
\|\phi\|_{L^\infty_{\ell}(\hat H)}:=\sup_{(x,y)\in H_1\times H_2}\frac{\|\phi(x,y)\|_{\hat H}}{1+\|x\|_{H_1}+\|y\|_{H_2}}<\infty.
$$

For $k\in\mN$, the space $C_{\ell}^{k,0}(H_1\times H_2,\hat H)$ contains all $\phi\in L^\infty_\ell(H_1\times H_2,\hat H)$ such that
$\phi$ has $k$ times G\^ateaux derivatives with respect to the $x$-variable satisfying
$$
\|\phi\|_{C_{\ell}^{k,0}(\hat H)}:=\sup_{(x,y)\in H_1\times H_2}\frac{\sum\limits_{\i=1}^k\|D_x^i\phi(x,y)\|_{\sL^i(H_1,\hat H)}}{1+\|x\|_{H_1}+\|y\|_{H_2}}<\infty.
$$
Similarly, the space $C^{0,k}_{\ell}(H_1\times H_2,\hat H)$ contains all $\phi\in L^\infty_\ell(H_1\times H_2,\hat H)$ such that
$\phi$ has $k$ times G\^ateaux derivatives with respect to the $y$-variable satisfying
\begin{align*}
\|\phi\|_{C_\ell^{0,k}(\hat H)}:=\sup_{(x,y)\in H_1\times H_2}\frac{\sum\limits_{i=1}^k\|D_y^i\phi(x,y)\|_{\sL^i(H_2,\hat H)}}{1+\|x\|_{H_1}+\|y\|_{H_2}}<\infty.
\end{align*}
For $k,m\in\mN$, let  $C^{k,m}_\ell(H_1\times H_2,\hat H)$ be the space of all maps satisfying
\begin{align}\label{norm}
\|\phi\|_{C_\ell^{k,m}(\hat H)}:=\|\phi\|_{L^\infty_\ell(\hat H)}+\|\phi\|_{C_\ell^{k,0}(\hat H)}+\|\phi\|_{C_\ell^{0,m}(\hat H)}<\infty,
\end{align}
and for $\eta\in(0,1)$, the space $C^{k,\eta}_{\ell}(H_1\times H_2,\hat H)$ consists of all $\phi\in C^{k,0}_{\ell}(H_1\times H_2,\hat H)$ satisfying
$$
\|\phi(x,y_1)-\phi(x,y_2)\|_{\hat H}\leq C_0\|y_1-y_2\|_{H_2}^\eta\big(1+\|x\|_{H_1}+\|y_1\|_{H_2}+\|y_2\|_{H_2}\big).
$$
The space $C_b^{k,\eta}(H_1\times H_2,\hat H)$ consists of all $\phi\in C_{\ell}^{k,\eta}(H_1\times H_2,\hat H)$  whose $k$ times   G\^ateaux derivatives with respect to the first variable are bounded, and the space $C_B^{k,\eta}(H_1\times H_2,\hat H)$ consists of all maps in $C_b^{k,\eta}(H_1\times H_2,\hat H)$ which are bounded themselves. We also introduce the space $\mC^{k,k}_l(H_1\times H_2,\hat H)$ consisting of all maps which have $k$ times Fr\'echet derivatives with respect to both the first variable and the second variable and satisfy (\ref{norm}).
 The space $\mC^{k,k}_b(H_1\times H_2,\hat H)$ consists of all $\phi\in\mC^{k,k}_l(H_1\times H_2,\hat H)$ with all derivatives bounded. When $\hat H=\mR$, we will omit the letter $\hat H$ for simplicity.

\section{Assumptions and main results}

Let $H:=L^2(D)$ be the usual space of square integrable functions on a bounded open domain $D$ in $\mR^d$ with scalar product and norm denoted by $\langle\cdot,\cdot\rangle$  and $\Vert\cdot\Vert$, respectively.
Let $A$ be the realization of the Laplacian with Dirichlet boundary conditions in $H$. It is  known that there exists a complete orthonormal basis $\{e_n\}_{n\in\mN}$ of $H$ such that
$$
Ae_n=-\lambda_ne_n,
$$
with $0<\lambda_1\leq\lambda_2\leq\cdots\lambda_n\leq\cdots.$
For $\alpha\in\mR$, let $H^\alpha:=\cD((-A)^{\frac{\alpha}{2}})$ be the Hilbert space endowed with the scalar product
$$
\<x,y\>_\alpha:=\<(-A)^{\frac{\alpha}{2}}x,(-A)^{\frac{\alpha}{2}}y\>
=\sum\limits_{n=1}^{\infty}\lambda_n^{\alpha}\<x,e_n\>\<y,e_n\>,\quad\forall x,y\in H^\alpha,
$$
and norm
$$
\|x\|_\alpha:=\left(\sum\limits_{n=1}^{\infty}\lambda_n^{\alpha}\<x,e_n\>^2\right)^{\frac{1}{2}},\quad\forall
x\in H^\alpha.
$$
Then $A$ can be regarded as an operator from $H^\alpha$ to $H^{\alpha-2}$.
 For the drift coefficients $f$ and $g$ given in system (\ref{spde01}), we  introduce two Nemytskii operators $F, G: H\times H\to H$  by
\begin{align}\label{NG}
F(u,y)(\xi):=f(u(\xi),y(\xi)),\quad G(u,y)(\xi):=g(u(\xi),y(\xi)),\quad\xi\in D.
\end{align}
We remark that these operators are not Fr\'echet differentiable in $H$.

To give precise results, it is convenient to write system (\ref{spde01}) in the following abstract formulation in $H$:
\begin{equation} \label{spde1}
\left\{ \begin{aligned}
&\frac{\partial^2U^{\eps}_t}{\partial t^2}=A U^{\eps}_t +F(U^{\eps}_t, Y^{\eps}_t)+\dot W^1_t,\qquad\qquad t\in(0,T],\\
&\frac{\partial Y^{\eps}_t}{\partial t} =\frac{1}{\eps}A Y^{\eps}_t+\frac{1}{\eps}G(U^{\eps}_t, Y^{\eps}_t)+\frac{1}{\sqrt{\eps}} \dot W_t^2,\quad\,\, t\in(0,T],\\
&U^{\eps}_0=u,\,\,\frac{\partial U^{\eps}_t}{\partial t}\big|_{t=0}=v,\,\,Y^{\eps}_0=y.
\end{aligned} \right.
\end{equation}
For $i=1,2$, we assume that $Q_i$ are nonnegative, symmetric operators with respect to $\{e_{n}\}_{n\in\mN}$, i.e.,
$$Q_ie_{n}=\beta_{i,n}e_{n},\;\;\beta_{i,n}>0,n\in\mN.$$
In addition, we  assume that
\begin{align}\label{trace}
Tr(Q_i)=\sum\limits_{n\in\mN}\beta_{i,n}<+\infty,\;\;i=1,2.
\end{align}
Given $u\in H$, consider the following frozen equation:
\begin{align}\label{froz}
\dif Y_t^u=AY_t^u \dif t+G(u,Y_t^u)\dif t+\dif W_t^2, \quad Y_0^u=y\in H.
\end{align}
Under our assumptions below, the process
$Y_t^u$  admits a unique invariant measure $\mu^u(\dif y)$. Then, the averaged equation for system (\ref{spde1}) is
\begin{equation} \label{spde22}
\left\{ \begin{aligned}
&\frac{\partial^2\bar U_t}{\partial t^2}=A \bar U_t+\bar F(\bar U_t)+\dot W^1_t,\;\;t\in (0,T],\\
&\bar U_0=u,\,\,\frac{\partial \bar U_t}{\partial t}|_{t=0}=v ,
\end{aligned} \right.
\end{equation}
where
\begin{align}\label{df1}
\bar F(u):=\int_HF(u,y)\mu^u(\dif y).
\end{align}

Let $\dot U^{\eps}_t:=\p U^{\eps}_t/\p t$ and $\dot{\bar U}_t:=\p\bar U_t/\p t.$
The following is the first main result of this paper.

\bt\label{main1}
	Let $T>0$, $u\in H^1$ and $v, y\in H$. Assume that $f\in C_b^{2,\eta}(\mR\times \mR,\mR)$ and $g\in C^{2,\eta}_B(\mR\times \mR,\mR)$ with $\eta>0.$  Then we have:
	
	\vspace{1mm}
	\noindent (i) (strong convergence) for any  $q\geq 1$,
	\begin{align}\label{rr1}
	\sup\limits_{t\in[0,T]}\mE\left(\| U^{\eps}_t-\bar U_t\|_1^2+\| \dot U^{\eps}_t-\dot{\bar U}_t\|^2\right)^{q/2}\leq C_1\,\eps^{q/2};
	\end{align}
   (ii) (weak convergence) for any  $ \phi\in \mC_b^3(H)$ and $\tilde\phi\in \mC_b^3(H^{-1})$,
	\begin{align}\label{rr2}
	\sup\limits_{t\in[0,T]}\Big(\big|\mE[\phi(U^{\eps}_t)]-\mE[\phi(\bar U_t)]\big|+\big|\mE[\tilde\phi(\dot U^{\eps}_t)]-\mE[\tilde\phi(\dot{\bar U}_t)]\big|\Big)\leq C_2\,\eps,
	\end{align}
	where  $C_1=C(T,u,v,y)$ and $C_2=C(T,u,v,y,\phi,\tilde\phi)$ are positive constants independent of $\eps$ and $\eta$.
	\et

\br
(i) The  $1/2$-order rate of strong convergence in (\ref{rr1}) and  the $1$-order rate of weak convergence in (\ref{rr2}) should be optimal, which coincides with the SDE case as well as the stochastic reaction-diffusion equation case. Moreover, we obtain that
 both the strong and weak convergence rates do not depend on the regularity of the coefficients in the equation for the fast variable. This coincides with the intuition, since in the limit equation the fast component has been averaged out.

\vspace{1mm}
(ii) Note that the coefficients are assumed to be only $\eta$-H\"older continuous with respect to the fast variable, which is sufficient for us to prove the above convergence  in the averaging principle. However, the pathwise uniqueness of solutions for  system (\ref{spde1}) is not clear under such weak assumptions. In particular,  if the system  is not fully coupled in the sense that the fast motion does not depend on the slow variable (i.e., $g(u,y)=g(y)$ in (\ref{spde01})), then the well-posedness for the fast equation with only H\"older continuous coefficients has been proven in \cite[Theorem 7]{DF} by using the Zvonkin's transformation. This in turn implies the strong well-posedness of the whole system (\ref{spde01}).
\er

Recall that $Z_t^\eps$ is defined by (\ref{zte}). In view of (\ref{spde1}) and (\ref{spde22}), we have
\begin{align*}
\frac{\partial^2 Z^\eps_t}{\partial t^2}&=A Z^\eps_t+\frac{1}{\sqrt{\eps}}\Big[F(U_t^\eps,Y_t^\eps)-\bar F(\bar U_t)\Big]\\
&=A Z^\eps_t+\frac{1}{\sqrt{\eps}}\Big[\bar F(U^\eps_t)-\bar F(\bar U_t)\Big]+\frac{1}{\sqrt{\eps}}\delta F(U_t^\eps,Y_t^\eps),
\end{align*}
where
\begin{align*}
\delta F(u,y):=F(u,y)-\bar F(u).
\end{align*}
To study the  homogenization behavior of $Z_t^\eps$, we consider the following Poisson equation:
\begin{align}\label{poF}
\cL_2(u,y)\Psi(u,y)=-\delta F(u,y),
\end{align}
where $\cL_2$ is the generator of the frozen equation (\ref{froz}) given by
\begin{align}\label{ly2}
\cL_2(u,y)\varphi(y)
&:=\<Ay+G(u,y),D_y\varphi(y)\>\no\\
&\,\,\,\quad+\frac{1}{2}Tr\left(D^2_{y}\varphi(y)Q_2^{\frac{1}{2}}(Q_2^{\frac{1}{2}})^*\right),\quad\forall\varphi\in C_\ell^2(H),
\end{align}
and $u\in H$ is regarded as a parameter.
According to Theorem \ref{PP} below, there exists a unique solution $\Psi$ to equation (\ref{poF}).
Then, the limit process $\bar Z_{t}$ of $Z_t^\eps$ turns out to satisfy the following linear stochastic wave equation:
\begin{equation} \label{spdez}
\left\{ \begin{aligned}
&\frac{\partial^2\bar Z_{t}}{\partial t^2}=A\bar Z_{t}+D_u\bar F(\bar U_t).\bar Z_{t}+\sigma(\bar U_t)\dot W_t,\;\;t\in (0,T],\\
&\bar Z_0=0,\,\,\frac{\partial \bar Z_t}{\partial t}|_{t=0}=0 ,
\end{aligned} \right.
\end{equation}
where   $W_t$ is another cylindrical Wiener process independent of $W_t^1$, and $\sigma$ is a Hilbert-Schmidt operator satisfying
\begin{align*}
\frac{1}{2}\sigma(u)\sigma^*(u)=\overline{\delta F\otimes\Psi}(u):=\int_{H}\big[\delta F(u,y)\otimes\Psi(u,y)\big]\mu^u(\dif y).
\end{align*}

 Let
$\dot Z^{\eps}_t:=\p Z^{\eps}_t/\p t$ and $\dot{\bar Z}_t:=\p\bar Z_t/\p t.$ We have the following result.

\bt[Normal deviation]\label{main3}
Let $T>0,$ $f\in C_b^{2,\eta}(\mR\times \mR,\mR)$ and $g\in C^{2,\eta}_B(\mR\times \mR,\mR)$ with $\eta>0.$  Then for any $u\in H^1$, $v, y\in H$, $\phi\in \mC_b^3(H)$ and $\tilde\phi\in \mC_b^3(H^{-1})$, we have
\begin{align}\label{clt}
\sup_{t\in[0,T]}\Big(\big|\mE[\phi(Z_{t}^{\eps})]-\mE[\phi(\bar Z_{t})]\big|+\big|\mE[\tilde\phi(\dot Z_{t}^{\eps})]-\mE[\tilde\phi(\dot{\bar Z}_{t})]\big|\Big)\leq C_3\,\eps^{\frac{1}{2}},
\end{align}
where  $C_3=C(T,u,\dot u,y,\phi,\tilde\phi)>0$ is a constant independent of $\eps$ and $\eta.$
\et

\br
The $1/2$-order rate of convergence in (\ref{clt}) coincides with the SDE case and  should be optimal. Moreover,
the convergence rate does not depend on the regularity of the coefficients in the equation for the fast variable.
\er

\section{Preliminaries}

\subsection{Poisson equation}
We will rewrite the system (\ref{spde1}) as an abstract evolution equation. To this end, we first introduce some notations. For $\alpha\in\mR$, by $\cH^\alpha:=H^\alpha\times H^{\alpha-1}$ we denote the Hilbert space endowed with  the scalar product
$$
\<u,v\>_{\cH^\alpha}:=\<u_1,v_1\>_\alpha+\<u_2,v_2\>_{\alpha-1},\quad \forall u=(u_1,u_2)^T,v=(v_1,v_2)^T\in \cH^\alpha,
$$
and norm
$$
\interleave u\interleave_\alpha^2:=\|u_1\|_\alpha^2+\|u_2\|_{\alpha-1}^2,\quad\forall u=(u_1,u_2)^T\in \cH^\alpha.
$$
For simplicity, we write $\cH:=H\times H^{-1}.$
Let $\Pi_1$ be the canonical projection from $\cH$ to $H,$ and define
$$
V_t^\eps:=\frac{\dif}{\dif t}U_t^\eps\quad\text{and}\quad X_t^\eps:=(U_t^\eps,V_t^\eps)^T.
$$
Then, the system (\ref{spde1}) can be rewritten as
 \begin{equation} \label{}
\left\{ \begin{aligned}\label{spde11}
&\dif X_t^\eps=\cA X^{\eps}_t+{\cF}(X^{\eps}_t, Y^{\eps}_t)+B\dif W^1_t,\\
&\dif Y^{\eps}_t =\eps^{-1}A Y^{\eps}_t+\eps^{-1}\cG(X^{\eps}_t, Y^{\eps}_t)+\eps^{-1/2}\dif W_t^2,\\
&X^{\eps}_0=x,\,Y^{\eps}_0=y,
\end{aligned} \right.
\end{equation}
where $x:=(u,v)^T$, $\cG(x,y):=G(\Pi_1(x),y)$ and
$$
\cA:=\begin{pmatrix}0&I\\A&0\end{pmatrix},\,\, \cF(x,y):=\begin{pmatrix}0\\F(\Pi_1(x),y)\end{pmatrix},\,\,B\dif W_t^1:=\begin{pmatrix}0\\\dif W_t^1\end{pmatrix},
$$
and $F, G$ are defined by (\ref{NG}). Similarly, concerning  the averaged equation (\ref{spde22}), let
$$
\bar V_t:=\frac{\dif}{\dif t}\bar U_t\quad\text{and}\quad \bar X_t:=( \bar U_t,\bar V_t)^T.
$$
Then we can transfer (\ref{spde22}) into a stochastic evolution equation:
\begin{align}\label{spde20}
\dif \bar{X}_t=\cA\bar{X}_t\dif t+\bar{\cF}(\bar{X}_t)\dif t+B\dif W_t^1,\quad \bar X_0=x=(u,v)^T\in \cH,
\end{align}
where
$$
\bar {\cF}(x):=\begin{pmatrix}0\\\bar F(\Pi_1(x))\end{pmatrix},
$$
and  $\bar F$ is defined by (\ref{df1}).
It is known (see e.g. \cite{BDT}) that $\cA$ generates a strongly continuous group $\{e^{t\cA}\}_{t\geq0}$ which is given by
\begin{align}\label{group}
e^{t\cA}=\begin{pmatrix}C_t&(-A)^{-\frac{1}{2}}S_t\\-(-A)^{\frac{1}{2}}S_t&C_t\end{pmatrix},
\end{align}
where $C_t:=\cos((-A)^{\frac{1}{2}})t)$ and $S_t:=\sin((-A)^{\frac{1}{2}})t).$
For any $x\in\cH$, we have $\interleave e^{\cA t}x\interleave_0\leq\interleave x\interleave_0.$ Moreover,
under the assumptions on $f$ and $g$, one can check that $F\in C_b^{2,\eta}(H\times H, H)$ and $G\in C_B^{2,\eta}(H\times H, H).$ By definition, we further have $\cF\in C_b^{2,\eta}(\cH\times H, \cH^1)$ and $\cG\in C_B^{2,\eta}(\cH\times H, H).$
Furthermore, according to   \cite[Lemma 3.7]{RXY},  we also have that $\bar\cF\in C_b^2(\cH,\cH^1)$.

The Poisson equation will be the crucial tool in our paper. Recall that  $\cL_2(u,y)$ is defined by (\ref{ly2}). If there is no confusion possible, we shall also write
\begin{align}\label{ly22}
\cL_2\varphi(y):=\cL_2(x,y)\varphi(y):=\cL_2(\Pi_1(x),y)\varphi(y),\quad\forall\varphi\in C_{\ell}^2(H).
\end{align}
Consider the following Poisson equation:
\begin{align}\label{pois}
\cL_2(x,y)\psi(x,y)=-\phi(x,y),
\end{align}
where $x\in \cH$ is regarded as a parameter, and $\phi: \cH\times H\rightarrow \hat H$ is measurable.
To be well-defined, it is necessary to make the following ``centering" assumption  on $\phi$:
\begin{align}\label{cen}
\int_{H}\phi(x,y)\mu^x(\dif y)=0,\quad\forall x\in \cH.
\end{align}
The following  result has been proven in \cite[Theorem 3.2]{RXY}.

\bt\label{PP}
Let $\eta>0$ and $k=0,1,2$, and assume  $\cG\in C_B^{k,\eta}(\cH\times H,H)$. Then for every $\phi(\cdot,\cdot)\in C_{\ell}^{k,\eta}(\cH\times H,\hat H)$ satisfying (\ref{cen}),
there exists  a unique solution $\psi(\cdot,\cdot)\in \psi\in C^{k,0}_{\ell}(\cH\times H,\hat H)\cap \mC^{0,2}_{\ell}(\cH\times H,\hat H)$ to equation (\ref{pois}) which is given by
\begin{align*}
\psi(x,y)=\int_0^\infty\mE\big[\phi(x,Y_t^x(y))\big]\dif t,
\end{align*}
where $Y_t^x(y)=Y_t^u(y)$ satisfies the frozen equation (\ref{froz}).
\et

\subsection{Moment estimates}
We   prove the following  estimates for the solution $X_t^\eps$ and $Y_t^\eps$ of system (\ref{spde11}).

\begin{lemma}\label{la41}
	Let $T>0,$  $x\in \cH^1, y\in H,$ and let $(X_t^\eps,Y_t^\eps)$ satisfy
	\begin{equation} \label{spde112}
	\left\{ \begin{aligned}
	&X^{\eps}_t =e^{t\cA}x+\int_0^te^{(t-s)\cA}\cF(X^{\eps}_s, Y^{\eps}_s)\dif s+\int_0^te^{(t-s)\cA}B\dif W^1_s,\\
	& Y^{\eps}_t =e^{\frac{t}{\eps}A}y+\eps^{-1}\int_0^te^{\frac{t-s}{\eps}A}\cG(X^{\eps}_s, Y^{\eps}_s)\dif s+\eps^{-1/2}\int_0^te^{\frac{t-s}{\eps}A}\dif W_s^2.\\
	\end{aligned} \right.
	\end{equation}
	Then for any $q\geq1,$ we have
	\begin{align*}
	\sup\limits_{\eps\in(0,1)}\mE\Big(\sup\limits_{t\in[0,T]}\interleave X^{\eps}_t\interleave_1^{2q}\Big)\leq C_{T,q}\big(1+\interleave x\interleave_1^{2q}+\| y\|^{2q}\big)
	\end{align*}
	and
	\begin{align}\label{msy}
	\sup\limits_{\eps\in(0,1)}\sup\limits_{t\in[0,T]}\mE\| Y^{\eps}_t\|^{2q}+\sup\limits_{\eps\in(0,1)}\mE\left(\int_0^T\|Y_t^\eps\|_1^2\dif t\right)^q \leq C_{T,q}(1+\| y\|^{2q}),
	\end{align}
	where $C_{T,q}>0$ is a constant.
\end{lemma}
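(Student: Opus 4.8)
The plan is to treat the two estimates in sequence, using the mild formulation \eqref{spde112} together with the contractivity properties of the two semigroups. First I would establish the moment bound for $Y^\eps_t$, since the bound for $X^\eps_t$ will depend on the $Y^\eps$-estimate through the drift $\cF$. For the fast equation, the key point is that $A$ generates an analytic semigroup with $\|e^{tA}\|_{\sL(H)}\leq e^{-\lambda_1 t}$, and the rescaled semigroup $e^{\frac{t}{\eps}A}$ therefore decays like $e^{-\lambda_1 t/\eps}$. Applying $\|\cdot\|$ to the mild formula for $Y^\eps_t$, using $\|\cG\|_\infty<\infty$ (since $g\in C_B^{2,\eta}$, hence $\cG$ is bounded), and the factorization/Burkholder inequality for the stochastic convolution $\eps^{-1/2}\int_0^t e^{\frac{t-s}{\eps}A}\dif W^2_s$, one gets for the drift term the bound $\eps^{-1}\int_0^t e^{-\lambda_1(t-s)/\eps}\dif s\,\|\cG\|_\infty\leq \lambda_1^{-1}\|\cG\|_\infty$, uniformly in $\eps$, while the stochastic convolution has $2q$-th moment bounded by $C(\Tr Q_2,q)$ uniformly in $\eps$ because the $\eps^{-1}$ from the time-rescaling is exactly compensated by $\eps$ from $Q_2^{1/2}/\sqrt\eps$; this uses \eqref{trace}. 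This yields $\sup_\eps\sup_{t\le T}\mE\|Y^\eps_t\|^{2q}\le C_{T,q}(1+\|y\|^{2q})$. The space-time bound $\mE(\int_0^T\|Y^\eps_t\|_1^2\dif t)^q$ follows similarly by testing the equation with $(-A)^{1/2}$ applied appropriately — i.e. using the smoothing estimate $\|(-A)^{1/2}e^{\frac t\eps A}\|_{\sL(H)}\le C(\eps/t)^{1/2}$, so $\eps^{-1}\int_0^t\|(-A)^{1/2}e^{\frac{t-s}\eps A}\|\,\|\cG\|_\infty\dif s$ is integrable in $t$ with an $\eps$-uniform bound on $\int_0^T(\cdot)\dif t$, and the stochastic term is handled by an It\^o-type energy estimate or again by the factorization method together with $\Tr Q_2<\infty$.

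Next I would bound $X^\eps_t$. Here the decisive structural fact, recorded in the paragraph before \eqref{group}, is that $\{e^{t\cA}\}$ is a $C_0$-group of isometries on $\cH$, and more relevantly on $\cH^1$, so $\interleave e^{t\cA}x\interleave_1\le\interleave x\interleave_1$. Taking $\interleave\cdot\interleave_1$ in the mild formula for $X^\eps_t$: the initial term gives $\interleave x\interleave_1$; the drift term gives $\int_0^t\interleave\cF(X^\eps_s,Y^\eps_s)\interleave_1\dif s$, and since $\cF\in C_b^{2,\eta}(\cH\times H,\cH^1)$ has linear growth, this is $\le C\int_0^t(1+\interleave X^\eps_s\interleave_1+\|Y^\eps_s\|)\dif s$; the stochastic convolution $\int_0^t e^{(t-s)\cA}B\dif W^1_s$ has, by Burkholder's inequality in $\cH^1$ (or a maximal inequality for stochastic convolutions with a $C_0$-group), $2q$-th moment bounded by $C_{T,q}\Tr Q_1$ — this is where one needs $u\in H^1$, i.e. $x\in\cH^1$, together with \eqref{trace} and the fact that $B$ maps into the right component so that $BQ_1^{1/2}$ is Hilbert–Schmidt from $H$ into $\cH^1$ (this requires care, but holds under \eqref{trace} since the wave semigroup does not smooth; in fact one should check that the stochastic convolution lives in $\cH^1$, which is exactly the regularity assumption $u\in H^1$ and $\Tr Q_1<\infty$ being compatible). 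Then I would raise to the power $2q$, take $\sup_{s\le t}$ inside using Burkholder for the martingale term and Jensen for the drift term, take expectations, insert the already-established bound $\sup_{s\le T}\mE\|Y^\eps_s\|^{2q}\le C_{T,q}(1+\|y\|^{2q})$, and close the argument with Gronwall's lemma in the function $t\mapsto\mE\sup_{s\le t}\interleave X^\eps_s\interleave_1^{2q}$.

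The main obstacle I anticipate is not the Gronwall step but the $\eps$-uniformity of the fast stochastic convolution and, relatedly, the correct handling of the regularity level $H^1$ versus the non-smoothing wave propagator: one must verify that the $\eps^{-1/2}$ scaling in front of $\dif W^2$ precisely balances the $\eps^{-1}$ produced by rescaling time in $e^{\frac t\eps A}$, so that $\mE\|\eps^{-1/2}\int_0^t e^{\frac{t-s}\eps A}\dif W^2_s\|^{2q}$ is bounded independently of $\eps$ — the clean way is the substitution $s\mapsto \eps s$, reducing it to the unscaled Ornstein–Uhlenbeck convolution run up to time $t/\eps$, whose stationary variance is $\frac12\sum_n\beta_{2,n}/\lambda_n\le\frac1{2\lambda_1}\Tr Q_2<\infty$. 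A secondary technical point is justifying the maximal inequality $\mE\sup_{t\le T}\interleave\int_0^t e^{(t-s)\cA}B\dif W^1_s\interleave_1^{2q}<\infty$ for a stochastic convolution driven by a mere $C_0$-group (no analytic smoothing), which is where one invokes a factorization or a direct martingale argument exploiting that $e^{t\cA}$ is unitary on $\cH^1$; this is standard but is the place where $x\in\cH^1$ (equivalently $u\in H^1$) is genuinely used.
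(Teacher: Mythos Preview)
Your proposal is correct and would yield the lemma, but it proceeds differently from the paper. The paper argues \emph{variationally}: for $Y^\eps_t$ it applies It\^o's formula to $\|Y^\eps_t\|^{2q}$, uses the Poincar\'e inequality $\langle AY,Y\rangle\le -\lambda_1\|Y\|^2$ and the boundedness of $\cG$, and closes by Gronwall; for $X^\eps_t$ it invokes the energy equality for the stochastic wave equation (from \cite[Theorem 5.3.5]{CPL}),
\[
\interleave X^\eps_t\interleave_1^2=\interleave x\interleave_1^2+2\int_0^t\langle V^\eps_s,F(U^\eps_s,Y^\eps_s)\rangle\,\dif s+2\int_0^t\langle U^\eps_t,\dif W^1_s\rangle+t\,\Tr Q_1,
\]
raises to the $q$-th power, and controls the martingale term by Burkholder--Davis--Gundy. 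Only the space--time bound $\mE\big(\int_0^T\|Y^\eps_t\|_1^2\,\dif t\big)^q$ is treated via the mild formula, essentially as you describe.

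The practical difference is in how the supremum-in-$t$ bound for $X^\eps$ is obtained. The energy equality automatically produces a pathwise identity for $\interleave X^\eps_t\interleave_1^2$ in which the stochastic term is a genuine martingale, so the maximal inequality is immediate. Your mild-formulation route requires the extra step you flag: writing $\int_0^t e^{(t-s)\cA}B\,\dif W^1_s=e^{t\cA}\int_0^t e^{-s\cA}B\,\dif W^1_s$ and using that $e^{t\cA}$ is an isometry on $\cH^1$ to reduce to BDG for the martingale $\int_0^\cdot e^{-s\cA}B\,\dif W^1_s$. This works cleanly here precisely because $\{e^{t\cA}\}$ is a group of isometries; it is the natural substitute for factorization in the absence of analytic smoothing. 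Your treatment of $\sup_t\mE\|Y^\eps_t\|^{2q}$ via the mild formula and boundedness of $\cG$ is slightly more direct than the paper's It\^o-based argument, and your handling of the $\eps$-balance in the fast stochastic convolution (via the substitution $s\mapsto\eps s$) is exactly right.
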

\begin{proof}
	Applying It\^o's formula (see e.g. \cite[Section 4.2]{LR}) to $\|Y_t^\eps\|^{2q}$ and taking expectation, we have
	\begin{align*}
	\frac{\dif}{\dif t}\mE\|Y_t^\eps\|^{2q}=\frac{2q}{\eps}\mE\left[\|Y_t^\eps\|^{2q-2}\<AY_t^\eps,Y_t^\eps\>\right]
	&+\frac{2q}{\eps}\mE\left[\|Y_t^\eps\|^{2q-2}\<\cG(X_t^\eps,Y_t^\eps),Y_t^\eps\>\right]\no\\
	&+\Big(\frac{q}{\eps}+\frac{2q(q-1)}{\eps}\Big)Tr(Q_2)\mE\|Y_t^\eps\|^{2q-2}.
	\end{align*}
	It follows from Poincar\'e inequality, Young's inequality and (\ref{trace}) that
	\begin{align*}
	\frac{\dif}{\dif t}\mE\|Y_t^\eps\|^{2q}&\leq-\frac{2q\lambda_1}{\eps}\mE\|Y_t^\eps\|^{2q}
	+\frac{2qC_0}{\eps}\mE\|Y_t^\eps\|^{2q-1}\\
	&\quad+\Big(\frac{q}{\eps}+\frac{2q(q-1)}{\eps}\Big)Tr(Q_2)\mE\|Y_t^\eps\|^{2q-2}\leq -\frac{qC_0}{\eps}\mE\|Y_t^\eps\|^{2q}+\frac{C_0}{\eps}.
	\end{align*}
	Using Gronwall's inequality, we obtain
	\begin{align}\label{msy'}
	\mE\|Y_t^\eps\|^{2q}\leq e^{-\frac{qC_0}{\eps}t}\|y\|^{2q}+\frac{C_0}{\eps}\int_0^te^{-\frac{qC_0}{\eps}(t-s)} \dif s\leq C_0(1+\|y\|^{2q}).
	\end{align}
Furthermore, in view of \cite[Theorem 5.3.5]{CPL}, the process $X_t^\eps=(U_t^\eps,V_t^\eps)^T$ enjoys the following energy equality:
	\begin{align*}
	\interleave X_t^\eps\interleave_1^2=\interleave x\interleave_1^2+2\int_0^t\<V_s^\eps,F(U_s^\eps,Y_s^\eps)\>\dif s+2\int_0^t\<U_t^\eps,\dif W_s^1\>+\int_0^tTrQ_1\dif s.
	\end{align*}
	Then it is easy to check that
	\begin{align}\label{x2q}
	\interleave X_t^\eps\interleave_1^{2q}
	\leq\! C_0\bigg(1+\interleave x\interleave_1^{2q}+\Big|\!\int_0^t\!\<V_s^\eps,F(U_s^\eps,Y_s^\eps)\>\dif s\Big|^q+\Big|\int_0^t\<U_t^\eps,\dif W_s^1\>\Big|^q\bigg).
	\end{align}
	On the one hand, note that
	\begin{align}\label{yf}
	&\mE\sup\limits_{0\leq t\leq T}\Big|\int_0^t\<V_s^\eps,F(U_s^\eps,Y_s^\eps)\>\dif s\Big|^q\no\\
&\leq C_1\mE\Big(\int_0^T\|V_s^\eps\|^2\dif s\Big)^q+C_1\mE\left(\int_0^T(1+\|U_s^\eps\|^2+\|Y_s^\eps\|^{2})\dif s\right)^q\no\\
	&\leq C_1\mE\left(\int_0^T\interleave X_s^\eps\interleave_1^{2q}\dif s\right)+C_1\mE\left(\int_0^T(1+\|Y_s^\eps\|^{2q})\dif s\right).
	\end{align}
	On the other hand, in view of  Burkholder-Davis-Gundy's inequality, we have
	\begin{align}\label{yw}
	\mE\sup\limits_{0\leq t\leq T}\Big|\int_0^t\<U_t^\eps,\dif W_s^1\>\Big|^q&\leq C_2TrQ_1\mE\left(\int_0^T\|U_s^\eps\|^2\dif s\right)^{\frac{q}{2}}\no\\
	&\leq C_2\mE\left(\int_0^T\interleave X_s^\eps\interleave_1^{2q}\dif s\right).
	\end{align}
	Combining (\ref{yf}) and (\ref{yw}) with (\ref{x2q}), we get
	\begin{align*}
	\mE\Big(\sup\limits_{0\leq t\leq T}\interleave X_t^\eps\interleave_1^{2q}\Big)
	&\leq C_3(1+\interleave x\interleave_1^{2q})+C_3\mE\left(\int_0^T\interleave X_s^\eps\interleave_1^{2q}+\|Y_s^\eps\|^{2q}\dif s\right).
	\end{align*}
	Thus, it follows from Gronwall's inequality that
	\begin{align*}
	\mE(\sup\limits_{0\leq t\leq T}\interleave X_t^\eps\interleave_1^{2q})
	&\leq C_4\left(1+\interleave x\interleave_1^{2q}+\int_0^T\mE\|Y_s^\eps\|^{2q}\dif s\right),
	\end{align*}
	which together with (\ref{msy'}) yields
	\begin{align*}
	\mE\Big(\sup\limits_{0\leq t\leq T}\interleave X^{\eps}_t\interleave_1^{2q}\Big)\leq C_5(1+\interleave x\interleave_1^{2q}+\| y\|^{2q}).
	\end{align*}
	In order to prove estimate (\ref{msy}), we deduce that
	\begin{align*}
	\mE\left(\int_0^T\|Y_t^\eps\|_1^2\dif t\right)^q&\leq C_q\left(\int_0^T\big\|e^{\frac{t}{\eps}A}y\big\|_1^2\dif t\right)^q\\ &\quad+C_q\,\mE\left(\int_0^T\Big\|\eps^{-1}\int_0^te^{\frac{t-s}{\eps}A}\cG((X^{\eps}_s, Y^{\eps}_s)\dif s\Big\|_1^2\dif t\right)^q\\
	&\quad+C_q\,\mE\left(\int_0^T\Big\|\eps^{-1/2}\int_0^te^{\frac{t-s}{\eps}A}\dif W_s^2\Big\|_1^2\dif t\right)^q=:\sum_{i=1}^3\sY_i(T,\eps).
	\end{align*}
	For the first term, we have
	\begin{align*}
	\sY_1(T,\eps)&\leq C_6\left(\int_0^{T/\eps}\sum\limits_{k=1}^\infty\lambda_ke^{-2\lambda_kt}
	\<y,e_k\>^2\dif t\right)^q\\&\leq C_6\left(\sum\limits_{k=1}^\infty(1-e^{\frac{-2\lambda_kT}{\eps}})\<y,e_k\>^2\right)^q\leq C_6\|y\|^{2q}.
	\end{align*}
	Note that
	\begin{align*}
	\Big\|\eps^{-1}\int_0^te^{\frac{t-s}{\eps}A}\cG(X^{\eps}_s, Y^{\eps}_s)\dif s\Big\|_1&\leq C_7 \eps^{-1}\int_0^t\Big(\frac{t-s}{\eps}\Big)^{-1/2}e^{-\frac{\lambda_1(t-s)}{2\eps}}\|\cG(X^{\eps}_s, Y^{\eps}_s)\|\dif s\\
	&\leq C_7 \int_0^{t/\eps}\frac{e^{-\frac{\lambda_1s}{2}}}{s^{1/2}} \dif s\leq C_7,
	\end{align*}
	which implies that
	\begin{align*}
	\sY_2(T,\eps)\leq C_8.
	\end{align*}
	For the last term, by Minkowski's inequality, Burkholder-Davis-Gundy's inequality and {(\ref{trace})}, we deduce that
	\begin{align*}
	\sY_3(T,\eps)&\leq C_{9}\bigg\{\int_0^T\left(\mE\Big\|\eps^{-1/2}\int_0^te^{\frac{t-s}{\eps}A}\dif W_s^2\Big\|_1^{2q}\right)^{1/q}\dif t\bigg\}^q\\
	&\leq
	C_{9}\bigg\{\int_0^T\bigg(\mE\Big(\eps^{-1}\int_0^t\sum\limits_{k=1}^
	\infty\lambda_ke^{-2\lambda_k\frac{t-s}{\eps}}\<Q_2e_k,e_k\>\dif s\Big)^q\bigg)^{1/q}\dif t\bigg\}^q\\
	&\leq C_{9}\bigg\{\int_0^T\bigg(\mE\Big(\int_0^{t/\eps}\sum\limits_{k=1}^\infty\lambda_k
	e^{-2\lambda_ks}\<Q_2e_k,e_k\>\dif s\Big)^q\bigg)^{1/q}\dif t\bigg\}^q\leq C_{9}.
	\end{align*}
	Combining the above computations, we get the desired result.
\end{proof}

We also need the following  estimate for $ \cA  X_t^\eps $.

\begin{lemma}\label{la44}
	 Let $T>0,$  $x=(u,v)^T\in \cH^1$ and $y\in H$. Then for any $q\geq1$ and $t\in[0,T],$ we have
	\begin{align*}
	\mE\interleave \cA  X_t^\eps\interleave_0^q\leq C_{T,q}(1+\interleave x\interleave_1^q+\|y\|^{q}),
	\end{align*}
	where $C_{T,q}>0$ is a constant.
\end{lemma}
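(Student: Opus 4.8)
The plan is to use the mild formulation of $X_t^\eps$ in \eqref{spde112}, namely $X^{\eps}_t =e^{t\cA}x+\int_0^te^{(t-s)\cA}\cF(X^{\eps}_s, Y^{\eps}_s)\dif s+\int_0^te^{(t-s)\cA}B\dif W^1_s$, and to apply the operator $\cA$ to each of the three terms, estimating the resulting $\interleave\cdot\interleave_0$-norm. The key structural observation is that $\cA$ maps $\cH^1$ into $\cH^0=\cH$ boundedly (since $\cA:H^\alpha\times H^{\alpha-1}\to H^{\alpha-1}\times H^{\alpha-2}$ from the block form), and that $e^{t\cA}$ commutes with $\cA$ and is a contraction on every $\cH^\alpha$ by \eqref{group} and the remark $\interleave e^{\cA t}x\interleave_0\le\interleave x\interleave_0$. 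The first term is then immediate: $\interleave\cA e^{t\cA}x\interleave_0=\interleave e^{t\cA}\cA x\interleave_0\le\interleave \cA x\interleave_0\le C\interleave x\interleave_1$.

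For the drift term, I would write $\cA\int_0^te^{(t-s)\cA}\cF(X^{\eps}_s, Y^{\eps}_s)\dif s$ and exploit that $\cF$ takes values in $\cH^1$ with $\cF\in C_b^{2,\eta}(\cH\times H,\cH^1)$, so that $\interleave\cF(X_s^\eps,Y_s^\eps)\interleave_1\le C(1+\interleave X_s^\eps\interleave_1+\|Y_s^\eps\|)$ — but actually, since $\cA$ has the off-diagonal block structure mapping $H^1\times H^0$ components, applying $\cA$ to $e^{(t-s)\cA}\cF$ costs one order of regularity, which is exactly absorbed by $\cF$ taking values in $\cH^1$. Thus $\interleave\cA e^{(t-s)\cA}\cF(X_s^\eps,Y_s^\eps)\interleave_0\le\interleave e^{(t-s)\cA}\cF(X_s^\eps,Y_s^\eps)\interleave_1\le\interleave\cF(X_s^\eps,Y_s^\eps)\interleave_1\le C(1+\interleave X_s^\eps\interleave_1+\|Y_s^\eps\|)$; integrating over $s\in[0,t]$, taking $q$-th moments via Minkowski's inequality, and invoking Lemma \ref{la41} together with \eqref{msy}/\eqref{msy'} controls this term by $C_{T,q}(1+\interleave x\interleave_1^q+\|y\|^q)$.

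For the stochastic convolution $\cA\int_0^te^{(t-s)\cA}B\dif W^1_s$, the point is that $B$ maps $H$ into $\cH^1=H^1\times H^0$ (the noise enters only the second component, landing in $H^0$, while the first component is $0\in H^1$ — no wait, more carefully: $B\dif W^1$ has the form $(0,\dif W^1)^T$, so $B: H\to \{0\}\times H\subseteq \cH^1$), hence $\cA e^{(t-s)\cA}B$ maps into $\cH^0$ with a bound coming from $\interleave \cA e^{(t-s)\cA}Bh\interleave_0\le\interleave e^{(t-s)\cA}Bh\interleave_1=\interleave Bh\interleave_1=\|h\|$. Then Burkholder–Davis–Gundy's inequality gives $\mE\interleave\cA\int_0^te^{(t-s)\cA}B\dif W^1_s\interleave_0^q\le C_q\,\mE\big(\int_0^t\|\cA e^{(t-s)\cA}B\|_{\sL_2(H,\cH)}^2\dif s\big)^{q/2}$, and $\|\cA e^{(t-s)\cA}B\|_{\sL_2(H,\cH)}^2\le \mathrm{Tr}(Q_1)<\infty$ by \eqref{trace}, yielding a bound $C_{T,q}$. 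Summing the three contributions and using $\mE\|Y_s^\eps\|^q\le C(1+\|y\|^q)$ finishes the proof.

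The main obstacle I anticipate is bookkeeping the regularity exponents correctly: one must verify that $\cA$ applied to $e^{t\cA}$ acting on something in $\cH^1$ genuinely lands in $\cH^0$ with a uniform-in-time bound — this relies on the group being a contraction on $\cH^0$ and commuting with $\cA$, so no smoothing of the semigroup is available (unlike the parabolic case), and everything must be paid for by the one-order gain built into the ranges of $\cF$ and $B$. There is no regularization in time from $e^{t\cA}$, so the argument is purely a mapping-property argument combined with the moment bounds of Lemma \ref{la41}; the only subtlety is checking $\cF\in C_b^{2,\eta}(\cH\times H,\cH^1)$ (already recorded in the excerpt) and $B\in\sL(H,\cH^1)$ are the right statements to close the estimate without loss.
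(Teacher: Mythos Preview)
Your proposal is correct and, at the level of ideas, matches the paper's proof: both use the mild formulation \eqref{spde112}, split into initial datum, drift, and stochastic convolution, and control each piece via the contraction property of $e^{t\cA}$, Minkowski's inequality together with Lemma~\ref{la41} for the drift, and Burkholder--Davis--Gundy plus \eqref{trace} for the noise.

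The only stylistic difference is that the paper carries out the computation componentwise, writing $\cA X_t^\eps=(V_t^\eps,AU_t^\eps)^T$ and estimating $\|(-A)^{1/2}U_t^\eps\|$ and $\|V_t^\eps\|$ separately via the explicit $C_t,S_t$ formulas from \eqref{group}, whereas you package everything through the abstract mapping property $\interleave\cA\,\cdot\,\interleave_0\le C\interleave\cdot\interleave_1$. Your formulation is cleaner and in fact reveals that the lemma is almost immediate: since $\interleave\cA w\interleave_0^2=\|w_2\|^2+\|Aw_1\|_{-1}^2=\|w_2\|^2+\|w_1\|_1^2=\interleave w\interleave_1^2$ for $w=(w_1,w_2)^T\in\cH^1$, one has $\interleave\cA X_t^\eps\interleave_0=\interleave X_t^\eps\interleave_1$, so the desired bound follows directly from the moment estimate for $\interleave X_t^\eps\interleave_1$ in Lemma~\ref{la41} without reopening the mild formulation.
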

\begin{proof}
By definition, we have
	$$\cA X_t^\eps=\begin{pmatrix}0&I\\A&0\end{pmatrix}
	\begin{pmatrix}U_t^\eps\\V_t^\eps\end{pmatrix}=
	\begin{pmatrix}V_t^\eps\\AU_t^\eps\end{pmatrix}.$$
	Thus, we deduce that
	\begin{align*}
	\interleave \cA  X_t^\eps\interleave_0^q&\leq C_q\left(\|V_t^\eps\|^q+\|AU_t^\eps\|^q_{-1}\right)\\
	&=C_q\left(\|V_t^\eps\|^q+\|(-A)^{\frac{1}{2}}U_t^\eps\|^q\right).
	\end{align*}
	It then follows from (\ref{spde112}) that
	\begin{align*}
	\mE\|(-A)^{\frac{1}{2}}U_t^\eps\|^q\leq&C_q\big(\|(-A)^{\frac{1}{2}}C_tu\|+\|S_tv\|\big)^q
	+C_q\mE\Big\|\int_0^tS_{t-s}F(U_s^\eps,Y_s^\eps)\dif s\Big\|^q\\
	&+C_q\mE\Big\|\int_0^tS_{t-s}\dif W_s^1\Big\|^q:=\sum_{i=1}^3\sU_i(t,\eps).
	\end{align*}
	For the first term, we have
	$$\sU_1(t,\eps)\leq C_1\interleave x\interleave_1^q.$$
	To control the second term, by Minkowski's inequality and Lemma \ref{la41}, we get
	\begin{align*}
	\sU_2(t,\eps)&\leq C_2\Big(\int_0^t\big(1+\mE\|U_s^\eps\|^q+\mE\|Y_s^\eps\|^{q}\big)^{1/q}\dif s\Big)^q\leq C_2(1+\interleave x\interleave_1^q+\|y\|^{q}).
	\end{align*}
	Finally, by Burkholder-Davis-Gundy's inequality, we obtain
	$$\sU_3(t,\eps)\leq C_3.$$
	Combining the above estimates, we have
	$$\mE\|(-A)^{\frac{1}{2}}U_t^\eps\|^q\leq C_4(1+\interleave x\interleave_1^{q}+\|y\|^{q}).$$
	Note that
	\begin{align*}
	V_t^\eps=-(-A)^{\frac{1}{2}}S_tu+C_tv+\int_0^tC_{t-s}F(U_s^\eps,Y_s^\eps)\dif s+\int_0^tC_{t-s}\dif W_s^1.
	\end{align*}
	In a similar way, we can prove that
	$$\mE\|V_t^\eps\|^q\leq C_5(1+\interleave x\interleave_1^{q}+\|y\|^{q}).$$
Combining the above, we get the desired result.
\end{proof}

The following estimates for the solution of the  averaged equation (\ref{spde20}) can be proved in a similar way as Lemmas \ref{la41} and \ref{la44}, hence we omit the details here.

\begin{lemma}
	Let $T>0$ and $x\in \cH^1$. The averaged equation (\ref{spde20}) admits a unique mild solution $\bar X_t$ such that for all $t\geq0,$
	\begin{align}\label{msb}\bar{X}_t =e^{t\cA}x+\int_0^te^{(t-s)\cA}\bar{\cF}(\bar{X}_s)\dif s+\int_0^te^{(t-s)\cA}B\dif W^1_s.\end{align}
Moreover,  for any $q\geq1$ we have
	\begin{align*}
	\sup\limits_{\eps\in(0,1)}\mE\Big(\sup\limits_{t\in[0,T]}\interleave\bar X_t\interleave_1^{2q}\Big)\leq C_{T,q}(1+\interleave x\interleave_1^{2q})
	\end{align*}
and
	\begin{align*}
	\mE\interleave \cA \bar X_t\interleave_0^q\leq C_{T,q}(1+\interleave x\interleave_1^q),
	\end{align*}
	where $C_{T,q}>0$ is a constant.
\end{lemma}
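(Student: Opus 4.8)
The plan is to follow the proofs of Lemmas \ref{la41} and \ref{la44} almost verbatim, the simplification being that the averaged drift $\bar\cF$ depends only on the slow variable and, by \cite[Lemma 3.7]{RXY}, satisfies $\bar\cF\in C_b^2(\cH,\cH^1)$; in particular it is globally Lipschitz as a map $\cH\to\cH^1\hookrightarrow\cH$ and has linear growth. For existence and uniqueness I would run the standard Banach fixed-point argument for mild solutions in the space of $\cH$-valued adapted processes with finite second moment on $[0,T]$: the group $\{e^{t\cA}\}_{t\geq0}$ is a contraction on $\cH$ (and on $\cH^1$), $\bar\cF$ is Lipschitz, and the stochastic convolution $\int_0^te^{(t-s)\cA}B\dif W_s^1$ takes values in $\cH^1$ with finite moments of every order --- this last fact being precisely the estimate of the term $\sY_3$ in the proof of Lemma \ref{la41}, which only uses $Tr Q_1<\infty$. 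When $x\in\cH^1$, each of the three terms on the right-hand side of (\ref{msb}) lies in $\cH^1$, so $\bar X_t\in\cH^1$.

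For the first moment bound I would mirror the energy-equality argument of Lemma \ref{la41}. Applying \cite[Theorem 5.3.5]{CPL} to $\bar X_t=(\bar U_t,\bar V_t)^T$ gives
$$\interleave\bar X_t\interleave_1^2=\interleave x\interleave_1^2+2\int_0^t\langle\bar V_s,\bar F(\bar U_s)\rangle\dif s+2\int_0^t\langle\bar U_s,\dif W_s^1\rangle+t\,Tr Q_1,$$
after which one raises to the power $q$, controls the drift term using $\|\bar F(\bar U_s)\|\leq C(1+\|\bar U_s\|)$ and Young's inequality, controls the stochastic term by the Burkholder--Davis--Gundy inequality together with $Tr Q_1<\infty$, and closes with Gronwall's inequality. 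This is strictly easier than Lemma \ref{la41} since no fast component $Y_s^\eps$ appears, so there is no $\|y\|$-dependence.

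For the estimate on $\cA\bar X_t$ I would copy the proof of Lemma \ref{la44}: since $\cA\bar X_t=(\bar V_t,A\bar U_t)^T$, it suffices to bound $\mE\|\bar V_t\|^q$ and $\mE\|(-A)^{1/2}\bar U_t\|^q$. Using the representation (\ref{group}) of $e^{t\cA}$ in (\ref{msb}),
$$(-A)^{1/2}\bar U_t=(-A)^{1/2}C_tu+S_tv+\int_0^tS_{t-s}\bar F(\bar U_s)\dif s+\int_0^tS_{t-s}\dif W_s^1,$$
and an analogous formula for $\bar V_t$ with $C_{t-s}$ in place of $S_{t-s}$; one then bounds the first two terms by $\interleave x\interleave_1$ (as $\|C_t\|_{\sL(H)},\|S_t\|_{\sL(H)}\leq1$ and $u\in H^1$), the drift integral by $C(1+\interleave x\interleave_1^q)$ via Minkowski's inequality and the moment bound from the previous step, and the stochastic convolution by a constant via Burkholder--Davis--Gundy and $Tr Q_1<\infty$. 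Combining these yields $\mE\interleave\cA\bar X_t\interleave_0^q\leq C_{T,q}(1+\interleave x\interleave_1^q)$.

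I do not expect any genuine obstacle --- this is precisely why the authors omit the details. The only two points requiring (routine) care are the well-definedness of the stochastic convolution as an $\cH^1$-valued process with all moments finite, and the applicability of the energy equality to (\ref{spde20}); both are inherited verbatim from the analysis of the coupled system (\ref{spde11}), and in fact the situation is cleaner here since $\bar\cF$ is globally Lipschitz, so one could even bypass the energy equality and argue directly from the mild formulation (\ref{msb}).
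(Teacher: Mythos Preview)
Your proposal is correct and matches the paper's approach exactly: the paper simply states that the estimates ``can be proved in a similar way as Lemmas \ref{la41} and \ref{la44}, hence we omit the details here,'' and you have filled in precisely those details, observing (correctly) that the absence of the fast variable and the Lipschitz property of $\bar\cF$ only simplify matters. The additional remarks on well-posedness via the contraction mapping and on the alternative of arguing directly from the mild formulation are sound and do not deviate from the intended argument.
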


\section{Strong and weak convergence in the averaging principle}

\subsection{Galerkin approximation}
It\^o's formula will be used frequently below in the proof of the main result. However, due to the persence of unbounded operators in the equation, we can not apply It\^o's formula for SPDE (\ref{spde11}) directly. For this reason, we use the following Galerkin approximation scheme, which reduces the infinite dimensional setting to a finite dimensional one.
For every $n\in\mN,$ let $H^n=span\{e_1,e_2,\cdots,e_n\}.$  Denote the projection of $H$ onto $H^n$ by $P_n$, and set
$$
{\cF}_n(x,y):=\begin{pmatrix}0\\P_nF(\Pi_1(x),y)\end{pmatrix},\quad \cG_n(x,y):=P_nG(\Pi_1(x),y).
$$
It is easy to check that $\cF_n$ and $\cG_n$ satisfy the same conditions as $\cF$ and $\cG$ with bounds which are uniform with respect to $n$. Consider the following finite dimensional system:
\begin{equation}\label{xyzn}
\left\{ \begin{aligned}
&\dif X^{n,\eps}_t =\cA X^{n,\eps}_t\dif t+{\cF}_n(X^{n,\eps}_t, Y^{n,\eps}_t)\dif t+P_n\dif W^1_t,\\
&\dif Y^{n,\eps}_t =\eps^{-1}AY^{n,\eps}_t\dif t+\eps^{-1}\cG_n(X^{n,\eps}_t, Y^{n,\eps}_t)\dif t+\eps^{-1/2} P_n\dif W_t^2,
\end{aligned} \right.
\end{equation}
with initial values $X_0^{n,\eps}=x^n\in H^n\times H^n$ and $Y_0^{n,\eps}=y^n\in H^n$.
The corresponding averaged equation for system (\ref{xyz}) is given by
\begin{align}\label{bxn}
\dif \bar{X}^n_t=\cA\bar{X}^n_t\dif t+\bar{\cF}_n(\bar{X}^n_t)\dif t+P_{n}\dif W_t^1,\quad \bar{X}^n_0=x^{n}\in H^n\times H^n,
\end{align}
where
\begin{align}\label{Fn}
\bar{\cF}_n(x):=\int_{H^n}{\cF}_n(x,y)\mu^{x}_n(\dif y),
\end{align}
and $\mu^{x}_n(\dif y)$ is the invariant measure  associated with the transition
semigroup of the process $Y_t^{x,n}(y)$ which satisfies the frozen equation
\begin{align*}
\dif Y^{x,n}_t =AY^{x,n}_t\dif t+ \cG_n(x^n, Y^{x,n}_t)\dif t+ P_n\dif W_t^2,\;\;Y^{x,n}_0=y^{n}\in H^n.
\end{align*}
Recall that $Y_t^u(y)$ satisfies (\ref{froz}) and note that  $\cG(x,y)=G(u,y).$ We know that $Y_t^{x,n}(y^n)$ converges  strongly to $Y_t^x(y):=Y_t^u(y)$.
Let $T>0,x\in \cH^1$ and $y\in H.$  Then as shown in the proof of \cite[Lemma 3.1]{CPL2}, for any $q\geq 1$ and $t\in[0,T],$ we have
\begin{align*}
\lim\limits_{n\to\infty}\mE\interleave X_t^{\eps}- X^{n,\eps}_t\interleave_1^q=0.
\end{align*}
Furthermore, in view of (\ref{spde112}), (\ref{msb}) and (\ref{group}) we deduce that
\begin{align*}
&\mE\interleave\bar X^n_t-\bar X_t\interleave_1^q\leq \mE\Big|\!\Big|\!\Big| \int_0^te^{(t-s)\cA}(I-P_n)B\dif W_s^1\Big|\!\Big|\!\Big| _1^q\\
&+\mE\!\left(\int_0^t\!\Big(\big\| (-A)^{-\frac{1}{2}}S_{t-s}(\bar F(\bar U_s)-\bar F_n(\bar U_s))\big\|_1\!+\!\big\| C_{t-s}(\bar F(\bar X_s)-\bar F_n(\bar U_s)\big\|\Big)\dif s\right)^q\\
&+\mE\!\left(\int_0^t\!\Big(\big\| (-A)^{-\frac{1}{2}}S_{t-s}(\bar F_n(\bar U_s)-\bar F_n(\bar U_s^n))\big\|_1\!+\!\big\| C_{t-s}(\bar F_n(\bar U_s)-\bar F_n(\bar U_s^n))\big\|\Big)\dif s\right)^q.
\end{align*}
Since $\|\bar F_n-\bar F\|\to0$ as $n\to\infty$ (see e.g. \cite[(4.4)]{Br1}), the first two terms go to 0 as $n\to\infty$ by the dominated convergence theorem. For the last term,  we have
\begin{align*}
&\mE\left(\int_0^t\Big(\big\| (-A)^{-\frac{1}{2}}S_{t-s}\big(\bar F_n(\bar U_s)-\bar F_n(\bar U_s^n))\big\|_1+\big\| C_{t-s}(\bar F_n(\bar U_s)-\bar F_n(\bar U_s^n))\big\|\Big)\dif s\right)^q\\
&\leq  C_1\;\mE\left(\int_0^t\|\bar U_s-\bar U^n_s\|_1\dif s\right)^q\leq  C_1\;\mE\left(\int_0^t\interleave\bar X_s-\bar X^n_s\interleave_1\dif s\right)^q,
\end{align*}
which in turn yields by Gronwall's inequality that
$$
\lim\limits_{n\to\infty}\mE\interleave\bar X^n_t-\bar X_t\interleave_1^q=0.
$$
 Therefore,
in order to prove Theorem {\ref{main1}}, we only need to show that for any $q\geq1,$
\begin{align}\label{nsx}
\sup_{t\in[0,T]}\mE\interleave X_t^{n,\eps}-\bar X^n_t\interleave_1^q\leq C_T\,\eps^{q/2},
\end{align}
and for every $\varphi\in \mC_b^3(\cH)$,
\begin{align}\label{nwx}
\sup_{t\in[0,T]}\big|\mE[\varphi(X_t^{n,\eps})]-\mE[\varphi(\bar X^n_t)]\big|\leq C_T\,\eps,
\end{align}
where {\bf $C_T>0$ is a constant independent of $n$}.
In the rest of this section, we shall only work with the approximating system (\ref{xyzn}), and prove
bounds that are uniform with respect to $n$.
To simplify the notations, we omit the index $n.$ In particular, the space $H^n$ is  denoted by $H$.

\subsection{Proof of Theorem \ref{main1} (strong convergence)}

For simplicity, let
\begin{align}\label{L1}
\cL_1\varphi(x):=\cL_1(x,y)\varphi(x):&=\<\cA x+{\cF}(x,y),D_x\varphi(x)\>_\cH\no\\&\;+\frac{1}{2}Tr\left(D^2_{x}\varphi(x)(BQ_1)^{\frac{1}{2}}((BQ_1)
^{\frac{1}{2}})^*\right),\quad\forall\varphi\in  C_{\ell}^2(\cH).
\end{align}
As shown in Subsection 4.1, to prove the strong convergence result (\ref{rr1}), we only need to prove (\ref{nsx}). To this end, we first establish the following fluctuation estimate for an integral functional of $(X_s^\eps,Y_s^\eps)$ over time interval $[0,t],$  which will play an important role in proving (\ref{nsx}).

\bl[Strong fluctuation estimate]\label{strf}
Let $T,\eta>0,{x=(u,v)^T\in \cH^1}$ and $y\in H.$ Assume that ${\cF}\in C_b^{2,\eta}(\cH\times H,\cH^1)$ and $\cG\in C^{2,\eta}_B(\cH\times H,H).$ Then for any $t\in[0,T]$, $q\geq1$ and every $\tilde\phi(x,y):=\begin{pmatrix}0\\\phi(u,y))\end{pmatrix}$ satisfying (\ref{cen}) with $\phi\in C_b^{2,\eta}(H\times H,H),$  we have
\begin{align*}
\mE\Big|\!\Big|\!\Big|\int_0^t e^{(t-s)\cA}\tilde\phi(X_s^\eps,Y_s^\eps)\dif s\Big|\!\Big|\!\Big|_1^q\leq C_{T,q}\,\eps^{q/2},
\end{align*}
where $C_{T,q}>0$ is a constant independent of $\eps,\eta$ and $n$.
\el

\begin{proof}
Let $\psi$  solve the Poisson equation,
$$
\cL_2(u,y)\psi(u,y)=-\phi(u,y),
$$
and define
$$
\tilde\psi_{t}(s,x,y):= e^{(t-s)\cA}\tilde\psi(x,y):=e^{(t-s)\cA}\begin{pmatrix}0\\\psi(u,y)\end{pmatrix}.
$$
Since $\cL_2$ is an operator with respect to the $y$-variable, one can check that
\begin{align}\label{ppo}
\cL_2\tilde\psi_{t}(s,x,y)=-e^{(t-s)\cA}\tilde\phi(x,y).
\end{align}
Applying It\^o's formula to $\tilde\psi_{t}(t, X_t^\eps,Y_t^{\eps})$, we get
\begin{align}\label{ito1}
\tilde\psi_{t}(t, X_t^\eps,Y_t^{\eps})&=\tilde\psi_{t}(0,x,y)+\int_0^t (\p_s+\cL_1)\tilde\psi_{t}(s,X_s^\eps,Y_s^{\eps})\dif s\no\\
&\quad+\frac{1}{\eps}\int_0^t\cL_2\tilde\psi_{t}(s,X_s^\eps,Y_s^{\eps})\dif s+M_t^1+\frac{1}{\sqrt{\eps}}M_t^2,
\end{align}
where $M_t^1$ and $M_t^2$ are  defined by
\begin{align*}
M_t^1:=\int_0^t D_x\tilde\psi_{t}(s,X_s^\eps,Y_s^{\eps})B\dif W_s^1\quad\text{and}\quad M_t^2:=\int_0^t D_y\tilde\psi_{t}(s,X_s^\eps,Y_s^{\eps})\dif W_s^2.
\end{align*}
Multiplying both sides of (\ref{ito1}) by $\eps$  and using (\ref{ppo}), we obtain
\begin{align}\label{poi}
&\int_0^t e^{(t-s)\cA}\tilde\phi(X_s^\eps,Y_s^{\eps})\dif s=-\int_0^t\cL_2\tilde\psi_{t}(s,X_s^\eps,Y_s^{\eps})\dif s\no\\
&=\eps\big[\tilde\psi_{t}(0,x,y)-\tilde\psi_{t}(t,X^\eps_t,Y_t^{\eps})\big]
+\eps\int_0^t\p_s\tilde\psi_{t}(s,X_s^\eps,Y_s^{\eps})\dif s\no\\
&\quad+\eps\int_0^t\cL_1\tilde\psi_{t}(s,X_s^\eps,Y_s^{\eps})\dif s+\eps M_t^1+\sqrt{\eps}M_t^2=:\sum_{i=1}^5\sJ_i(t,\eps).
\end{align}
According to Theorem \ref{PP} , we have that $\psi \in C_{\ell}^{2,2}(H\times H,H)$ and hence
\begin{align*}
\interleave e^{(t-s)\cA}\tilde\psi(x,y)\interleave_1&=\Big|\!\Big|\!\Big| \begin{pmatrix}(-A)^{-\frac{1}{2}}S_{t-s}\psi(u
,y)\\C_{t-s}\psi(u,y)\end{pmatrix}\Big|\!\Big|\!\Big|_1
\\&=\| (-A)^{-\frac{1}{2}}S_{t-s}\psi(u,y)\|_1+\| C_{t-s}\psi(u,y)\|\\&\leq2\|\psi(u,y)\|\leq C_1(1+\| u\|+\|y\|).
&\end{align*}
As a result,   by Lemma {\ref{la41}} we get
 \begin{align*}
\mE\interleave\sJ_{1}(t,\eps)\interleave_1^q&\leq C_1\,\eps^q(1+\mE\| U_t^\eps\|^q+\mE\|Y_t^{\eps}\|^{q})\leq C_1\,\eps^q.
\end{align*}
Note that
$$
\p_s\tilde\psi_{t}(s,x,y)=-\cA e^{(t-s)\cA}\tilde\psi(x,y),
$$
and that
\begin{align*}
\interleave \cA e^{(t-s)\cA}\tilde\psi(x,y)\interleave_1&=\Big|\!\Big|\!\Big| \begin{pmatrix}C_{t-s}\psi(u,y)\\-(-\cA)^{\frac{1}{2}}S_{t-s}\psi(u,y)\end{pmatrix}\Big|\!\Big|\!\Big|_1
\\&=\| C_{t-s}\psi(u,y)\|_1+\| -(-A)^{\frac{1}{2}}S_{t-s}\psi(u,y)\|\\&\leq2\|\psi(u,y)\|_1\leq C_2(1+\| u\|_1^2+\|y\|_1^2),
\end{align*}
where the last inequality can be obtained as in \cite[(2.16)]{CG}. Thus, using Minkowski's inequality and Lemma \ref{la41} again, we have
\begin{align*}
\mE\interleave\sJ_{2}(t,\eps)\interleave_1^q&\leq C_2\,\eps^q\,\Big(\int_0^T\big(1+\mE\|U_s^\eps\|_1^{2q}\big)^{1/q}\dif t\Big)^q\\
&\quad+C_2\,\eps^q\,\mE\Big(\int_0^T\|Y_s^{\eps}\|_1^2\dif s\Big)^q\leq C_2\,\eps^q.
\end{align*}
For the third term, we have
\begin{align*}
&|\cL_1\tilde\psi_{t}(s,X_s^\eps,Y_s^{\eps})|\leq C_3\,\big(1+\interleave \cA X_s^\eps\interleave_0^2+\interleave X_s^\eps\interleave_1^2+\|Y_s^{\eps}\|^2\big),
\end{align*}
which together with Minkowski's inequality, Lemmas \ref{la41} and {\ref{la44}} yields that
\begin{align*}
\mE\!\interleave\!\sJ_{3}(t,\eps)\interleave_1^q&\leq\! C_3\,\eps^q\bigg(\!\int_0^t\!\Big(\mE\big(1\!+\!\interleave \cA X_s^\eps\interleave_0^2+\interleave X_s^\eps\interleave_1^2+\| Y_s^{\eps}\|^2\big)^q\Big)^{1/q}\dif s\!\bigg)^q\!\leq\! C_3\,\eps^q.
\end{align*}
Finally, by Burkholder-Davis-Gundy's inequality, Theorem {\ref{PP}}, Lemma \ref{la41} and (\ref{trace}), we have
\begin{align*}
\mE\interleave\sJ_{4}(t,\eps)\interleave_1^q&\leq C_4\,\eps^q\Big(\int_0^T\mE\big\| e^{(t-s)\cA}D_x\tilde\psi(X_s^\eps,Y_s^{\eps})BQ_1^{\frac{1}{2}}\big\|^{2}_{\sL_2(\cH^1)}\dif  s\Big)^{q/2}\\
&\leq  C_4\,\eps^q\left(\int_0^T(1+\mE\interleave X_s^\eps\interleave_1^{2}+\mE\|Y_s^{\eps}\|^{2})\dif  s\right)^{q/2}\leq  C_4\,\eps^q,
\end{align*}
and similarly,
\begin{align*}
\mE\interleave\sJ_{5}(t,\eps)\interleave_1^q \leq  C_5\,\eps^{q/2}.
\end{align*}
Combining the above inequalities with (\ref{poi}), we get
the desired estimate.
\end{proof}

We are now in the position to give:

\begin{proof}[Proof of estimate (\ref{nsx})]
	
Fix $T>0$ below. In view of (\ref{spde112}) and (\ref{msb}), for every $t\in[0,T]$ we have
\begin{align*}
X_t^\eps-\bar X_t&=\int_0^t e^{(t-s)\cA}\big[\bar \cF(X_s^\eps)-\bar \cF(\bar X_s)\big]\dif s+\int_0^t e^{(t-s)\cA}\delta \cF(X_s^\eps,Y_s^{\eps})\dif s,
\end{align*}
where $\delta \cF$ is defined by
\begin{align}\label{dF}
\delta \cF(x,y):=\cF(x,y)-\bar \cF(x)=\begin{pmatrix}0\\ \delta F(\Pi_1(x),y)\end{pmatrix}.
\end{align}
Thus, we have for any $q\geq1,$
\begin{align*}
\mE \interleave X_t^\eps-\bar X_t\interleave_1^q&\leq C_0\, \mE\Big|\!\Big|\!\Big|\int_0^t e^{(t-s)\cA}\big[\bar \cF(X_s^\eps)-\bar \cF(\bar X_s)\big]\dif s\Big|\!\Big|\!\Big|_1^q\\
&+C_0\,\mE\Big|\!\Big|\!\Big|\int_0^t e^{(t-s)\cA}\delta \cF(X_s^\eps,Y_s^{\eps})\dif s\Big|\!\Big|\!\Big|_1^q=:\sI_1(t,\eps)+\sI_2(t,\eps).
\end{align*}
Since $\bar\cF\in C_b^2(\cH,\cH^1),$ by Minkowski's inequality we deduce that
\begin{align*}
\sI_1(t,\eps)&\leq C_1\mE\Big(\int_0^t\interleave\bar \cF(X_s^\eps)-\bar \cF(\bar X_s)\interleave_1\dif s\Big)^q\leq C_1\int_0^t\mE\interleave X_s^\eps-\bar X_s\interleave_1^q\dif s.
\end{align*}
For the second term, noting that $\delta \cF(x,y)$ satisfies the centering condition (\ref{cen}),
it follows by Lemma \ref{strf} directly that
\begin{align*}
\sI_{2}(t,\eps)\leq C_2 \,\eps^{q/2}.
\end{align*}
Thus, we arrive at
\begin{align*}\label{}
\mE \interleave X_t^\eps-\bar X_t\interleave_1^q\leq C_3\,\eps^{q/2}+C_3\,\int_0^t\mE\interleave X_s^\eps-\bar X_s\interleave_1^q\dif s,
\end{align*}
which together with Gronwall's inequality yields the desired result.
\end{proof}

\subsection{Proof of Theorem \ref{main1} (weak convergence)}
As in the previous subsection, to prove the weak convergence result in Theorem \ref{main1} , we only need to show (\ref{nwx}).
The main reason for the difference between the strong and weak convergence rates in the averaging principle can be seen through the following  estimate.

\bl[Weak fluctuation estimate]\label{wef1}
Let $T,\eta>0,{x=(u,v)^T\in \cH^1}$ and $y\in H.$ Assume that ${\cF}\in C_b^{2,\eta}(\cH\times H,\cH^1)$ and $\cG\in C^{2,\eta}_B(\cH\times H,H).$ Then for any $t\in[0,T]$, $\phi\in C_{\ell}^{1,2,\eta}([0,T]\times\cH\times H)$ satisfying (\ref{cen}) and
\begin{align}\label{as01}
|\p_t\phi(t,x,y)|\leq C_0(1+\interleave x\interleave_1^2+\|y\|^2),
\end{align}
we have
\begin{align*}
\mE\left(\int_0^t\phi(s,X_s^\eps,Y_s^\eps)\dif s\right)\leq C_T\;\eps,
\end{align*}
where $C_T>0$ is a constant independent of $\eps,\eta$  and $n$.
\el
\begin{proof}

Let $\psi$  solve the Poisson equation
\begin{align}\label{pot}
\cL_2\psi(t,x,y)=-\phi(t,x,y),
\end{align}
where $\cL_2$ is given by (\ref{ly22}).
According to Theorem \ref{PP}, we can apply  It\^o's formula to $\psi(t,X_t^\eps,Y_t^{\eps})$ to get that
\begin{align*}
\mE[\psi(t,X_t^\eps,Y_t^{\eps})]&=\psi(0,x,y)+
\mE\left(\int_0^t(\partial_s+\mathcal{L}_1)\psi(s,X_s^\eps,Y_s^{\eps})\dif s\right)\\
&\quad+\frac{1}{\eps}\mE\left(\int_0^t\mathcal{L}_2\psi(s,X_s^\eps,Y_s^{\eps})\dif s\right).
\end{align*}
Combining this with (\ref{pot}), we obtain
\begin{align*}
&\mE\left(\int_0^t\phi(s,X_s^\eps,Y_s^{\eps})\dif s\right)\\
&=\eps\mE\big[\psi(0,x,y)-\psi(t,X_t^\eps,Y_t^{\eps})\big]
+\eps\mE\left(\int_0^t\mathcal{L}_1\psi(s,X_s^\eps,Y_s^{\eps})\dif s\right)\\
&\quad+\eps\mE\left(\int_0^t\partial_s\psi(s,X_s^\eps,Y_s^{\eps})\dif s\right)=:\sum_{i=1}^3\sW_i(t,\eps).
\end{align*}
By using exactly the same arguments as in the proof of Lemma \ref{strf}, we can get that
\begin{align*}
\sW_1(t,\eps)+\sW_2(t,\eps)\leq C_1\,\eps.
\end{align*}
To control the third term, note that
$$
\cL_2\p_t\psi(t,x,y)=-\p_t\phi(t,x,y).
$$
In view of  condition (\ref{as01}), we have
\begin{align*}
|\p_t\psi(t,x,y)|\leq C_0(1+\interleave x\interleave_1^2+\|y\|^{2}),
\end{align*}
which together with Lemma \ref{la41} implies that
\begin{align*}
\sW_3(t,\eps)&\leq C_2\,\eps\mE\left(\int_0^t(1+\interleave X_s^\eps\interleave_1^2+\|Y_s^{\eps}\|^{2})\dif s\right)\leq C_2\,\eps.
\end{align*}
Combining the above estimates, we get the desired result.
\end{proof}

Given $T>0$,  consider the  following Cauchy problem on $[0,T]\times\cH$:
\begin{equation} \label{ke}
\left\{ \begin{aligned}
&\partial_t\bar u(t,x)=\bar \cL_1\bar u(t,x),\quad t\in(0,T],\\
& \bar u(0,x)=\varphi(x),
\end{aligned} \right.
\end{equation}
where $\varphi:\cH\to\mR$ is measurable and $\bar\cL_1$ is formally the infinitesimal generator of the process $\bar X_t$ given by
\begin{align}\label{lxb}
\bar\cL_1\varphi(x)&=\<\cA x+\bar \cF(x),D_x\varphi(x)\>_\cH\no\\
&\quad+\frac{1}{2}Tr\left(D^2_{x}\varphi(x)(BQ_1)^{\frac{1}{2}}((BQ_1)^{\frac{1}{2}})
^*\right),\quad\forall\varphi\in C_{\ell}^2(\cH).
\end{align}
 The following result has been proven in \cite[Lemmas A.3-A.5 and 4.3]{FWLL}.
\bl\label{th47}
For every $\varphi\in \mC_b^3(\cH)$, there exists a solution $\bar u\in C_b^{1,3}([0,T]\times \cH)$ to equation (\ref{ke}) which is given by
\begin{align*}
\bar u(t,x)=\mE\big[\varphi(\bar X_t(x))\big].
\end{align*}
Moreover,
for any $t\in[0,T]$ and $x, h\in\cH^1$, we have
\begin{align*}
\vert \p_tD_x\bar u(t,x).h|\leq C_T\interleave h\interleave_1(1+\interleave x\interleave_1),
\end{align*}
where $C_T>0$ is a constant.
\el

Now, we are  in the position to give:

\begin{proof}[Proof of estimate (\ref{nwx})]
Given $T>0$ and  $\varphi\in \mC_b^3(\cH)$, let $\bar u$ solve the Cauchy problem (\ref{ke}). For any $t\in[0,T]$ and $x\in \cH^1$, define
$$
\tilde u(t,x):=\bar u(T-t,x).
$$
Then one can check that
$$
\tilde u(T,x)=\bar u(0,x)=\varphi(x)\quad\text{and}\quad\tilde u(0,x)=\bar u(T,x)=\mE[\varphi(\bar X_T(x))].
$$
Using It\^o's formula and taking expectation, we deduce that
\begin{align*}
\mE[\varphi(X_T^{\eps})]-\mE[\varphi(\bar X_T)]&=\mE[\tilde u(T,X_T^{\eps})-\tilde u(0,x)]\\
&=\mE\left(\int_0^T\big(\p_t+\cL_1\big)\tilde u(t,X_t^{\eps})\dif t\right)\\
&=\mE\left(\int_0^T[\mathcal{L}_1\tilde u(t,X_t^{\eps})-\mathcal{\bar L}_1\tilde u(t,X_t^{\eps})]\dif t\right)\\
&=\mE\left(\int_0^T\langle \delta \cF(X_t^\eps,Y_t^\eps), D_x\tilde u(t,X_t^{\eps})\rangle_\cH \dif t\right).
\end{align*}
Note that the function
$$
\phi(t,x,y):=\<\delta \cF(x,y), D_x\tilde u(t,x)\>_\cH
$$
satisfies the centering condition (\ref{cen}).
Moreover, by Lemma \ref{th47} we have
\begin{align*}
\p_t\phi(t,x,y)=\<\delta \cF(x,y), \p_tD_x\bar u(T-t,x)\>_\cH
\leq C_0(1+\interleave x\interleave_1^2+\|y\|^2).
\end{align*}
As a result of Lemma \ref{wef1}, we have
\begin{align*}
\mE[\varphi(X_T^{\eps})]-\mE[\varphi(\bar X_T)]\leq C_1\,\eps,
\end{align*}
which completes the proof.
\end{proof}

\section{Normal deviations}

\subsection{Cauchy problem}
Define
\begin{align*}
\cZ_t^\eps:=\frac{X_t^\eps-\bar X_t}{\sqrt{\eps}}.
\end{align*}
 In view of (\ref{spde11}) and (\ref{spde20}), we consider the process $(X^{\eps}_t, Y^{\eps}_t, \bar X_t, \cZ_t^{\eps})$ as the solution to the following system of equations:
\begin{equation}\label{xyz}
\left\{ \begin{aligned}
&\dif X^{\eps}_t =\!\cA X^{\eps}_t\dif t+\cF(X^{\eps}_t, Y^{\eps}_t)\dif t+B\dif W^1_t,\qquad\qquad\qquad\qquad\quad\quad\,\,\, X_0^\eps=x,\\
&\dif Y^{\eps}_t =\!\eps^{-1}AY^{\eps}_t\dif t+\eps^{-1}\cG(X^{\eps}_t, Y^{\eps}_t)\dif t+\eps^{-1/2} \dif W_t^2,\qquad\qquad\quad
\quad\,\,Y^{\eps}_0=y,\\
&\dif \bar X_t=\!\cA\bar X_t\dif t+\bar \cF(\bar X_t)\dif t+B\dif W^1_t,\quad\qquad\quad\qquad\quad\qquad\quad\quad\,\,\,\,\quad\bar X_0=x,\\
&\dif \cZ_t^{\eps}=\!\cA \cZ_t^\eps \dif t+\eps^{-1/2}[\bar \cF(X_t^\eps)-\bar \cF( \bar X_t)]\dif t+\eps^{-1/2}\delta \cF(X_t^\eps,Y_t^\eps)\dif t, \,\, Z_0^{\eps}=0,
\end{aligned} \right.
\end{equation}
where $\delta \cF$ is defined by (\ref{dF}).
As a result of Theorem \ref{main1}, we have that for any $q\geq1,$
\begin{align}\label{zne}
\sup\limits_{0\leq t\leq T}\mE\interleave \cZ_t^\eps\interleave_1^q\leq C_T<\infty.
\end{align}
Furthermore, note that
\begin{align*}
\cA \cZ_t^\eps=\begin{pmatrix}0&I\\A&0\end{pmatrix}
\begin{pmatrix}\frac{U_t^\eps-\bar U_t}{\sqrt{\eps}}\\\frac{V_t^\eps-\bar V_t}{\sqrt{\eps}}\end{pmatrix}=\begin{pmatrix}\frac{V_t^\eps-\bar V_t}{\sqrt{\eps}}\\\frac{A(U_t^\eps-\bar U_t)}{\sqrt{\eps}}\end{pmatrix},
\end{align*}
hence we have
\begin{align}\label{azne}
\mE\interleave \cA \cZ_t^\eps\interleave_0^q&=\mE\left(\Big\|\frac{V_t^\eps-\bar V_t}{\sqrt{\eps}}\Big\|^2+\Big\|\frac{A(U_t^\eps-\bar U_t)}{\sqrt{\eps}}\Big\|_{-1}^2\right)^{q/2}\no\\
&=\mE\left(\Big\|\frac{V_t^\eps-\bar V_t}{\sqrt{\eps}}\Big\|^2+\Big\|\frac{(U_t^\eps-\bar U_t)}{\sqrt{\eps}}\Big\|_{1}^2\right)^{q/2}
\leq C_T<\infty.
\end{align}

Similarly, we
rewrite (\ref{spdez}) as
\begin{align}\label{zz0}
\dif \bar \cZ_t=\cA\bar \cZ_t\dif t+D_x\bar \cF(\bar X_t).\bar \cZ_t\dif t+\Sigma(\bar X_t)\dif  W_t,
\end{align}
where $\bar \cZ_t=(\bar Z_{t},\dot{\bar Z}_{t})^T$, and $\Sigma$ is a Hilbert-Schmidt operator satisfying
\begin{align}\label{sst1}
\frac{1}{2}\Sigma(x)\Sigma^*(x)=\overline{\delta \cF\otimes\tilde\Psi}(x):=\int_{H}\big[\delta \cF(x,y)\otimes\tilde\Psi(x,y)\big]\mu^x(\dif y),
\end{align}
(see e.g. \cite[(1.6)]{Ce2} and \cite[(11)]{WR}),
and $\tilde\Psi$ is the solution of the following Poisson equation:
\begin{align}\label{poF1}
\cL_2(x,y)\tilde\Psi(x,y)=-\delta \cF(x,y).
\end{align}

Recall that $\cL_2(x,y)=\cL_2(u,y)$ and $\Psi(u,y)$ solves the Poisson equation (\ref{poF}). Thus, we have $\tilde\Psi(x,y)=\Psi(\Pi_1(x),y)=\Psi(u,y).$ Combining (\ref{spde20}) and (\ref{zz0}), the process $(\bar X_t, \bar \cZ_t)$ solves the system
\begin{equation*}
\left\{ \begin{aligned}
&\dif \bar{X}_t=\cA\bar{X}_t\dif t+\bar{\cF}(\bar{X}_t)\dif t+\dif W_t^1,\qquad\qquad\qquad\,\,\, \bar X_0=x,\\
&\dif \bar \cZ_t=\cA\bar \cZ_t\dif t+D_x\bar \cF(\bar X_t).\bar \cZ_t\dif t+\Sigma(\bar X_t)\dif W_t,\qquad\!\!\bar \cZ_0=0.
\end{aligned} \right.
\end{equation*}
Note that the processes $\bar X_t$ and $\bar \cZ_{t}$ depend on the initial value $x$. Below, we shall write $\bar X_t(x)$  when we want to stress its dependence on the
initial value, and use $\bar \cZ_{t}(x,z)$ to denote the process $\bar \cZ_{t}$ with initial point $\bar \cZ_0=z\in \cH$.

Given $T>0,$ consider the following  Cauchy problem on $[0,T]\times\cH\times\cH$:
\begin{equation} \label{kez}
\left\{ \begin{aligned}
&\partial_t\bar u(t,x,z)=\bar \cL\bar u(t,x,z),\quad t\in(0,T],\\
& \bar u(0,x,z)=\varphi(z),\\
\end{aligned} \right.
\end{equation}
where $\varphi:\cH\to\mR$ is measurable and $\bar \cL$ is formally the infinitesimal generator of the  Markov process $(\bar X_t, \bar \cZ_t)$, i.e.,
$$
\bar \cL:=\bar \cL_1+\bar \cL_3,
 $$
 with $\bar \cL_1$ given by (\ref{lxb}) and $\bar \cL_3$  defined by
\begin{align*}
\bar \cL_3\varphi(z):=\bar \cL_3(x,z)\varphi(z)&:=\<\cA z+D_x\bar \cF(x).z,D_z\varphi(z)\>_{\cH}\\&\;+\frac{1}{2}\,Tr\big(D^2_{z}\varphi(z)\Sigma(x)\Sigma^*(x)\big),\quad\forall\varphi\in C_{\ell}^2(\cH).
\end{align*}
We have the following result.

\begin{lemma}\label{bure}
For every $\varphi\in \mC_b^3(\cH)$, there exists a solution $\bar u\in C_b^{1,3,3}([0,T]\times \cH\times \cH)$ to equation (\ref{kez}) which is given by
\begin{align}\label{kol}
\bar u(t,x,z)=\mE\big[\varphi(\bar \cZ_t(x,z))\big].
\end{align}
Moreover,
	for any $t\in[0,T]$ and $x,z,h\in \cH^1$, we have
	\begin{align}\label{utzx}
	\vert \p_tD_z&\bar u(t,x,z).h|+\vert \p_tD_x\bar u(t,x,z).h|\no\\&\leq C_0\big(1+\interleave x\interleave^2_1+\interleave z\interleave_1+\interleave \cA x\interleave_0+\interleave \cA z\interleave_0\big)\big(\interleave h\interleave_1+\interleave \cA h\interleave_0\big),
	\end{align}
where $C_0>0$ is a positive constant.
\end{lemma}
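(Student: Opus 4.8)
The plan is to prove Lemma \ref{bure} by the probabilistic (Feynman--Kac) method already used for Lemma \ref{th47}, exploiting the fact that the deviation equation (\ref{zz0}) is linear, hence affine, in the variable $z$. First I would establish well-posedness and moment bounds for the coupled process $(\bar X_t,\bar\cZ_t)$. Since $\bar\cF\in C_b^2(\cH,\cH^1)$, the drift coefficient $D_x\bar\cF(\bar X_t)$ in (\ref{zz0}) is bounded, while the diffusion $\Sigma(\bar X_t)$ is controlled through the identity $\Sigma\Sigma^*=2\,\overline{\delta\cF\otimes\tilde\Psi}$, with $\tilde\Psi(x,y)=\Psi(\Pi_1(x),y)$ the solution of the Poisson equation (\ref{poF}), so that $\Sigma$ inherits the $x$-regularity and growth of $\Psi$ provided by Theorem \ref{PP}. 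A unique mild solution $\bar\cZ_t(x,z)$ therefore exists, and arguing exactly as in Lemma \ref{la41} and in the derivation of (\ref{zne})--(\ref{azne}) one obtains, for every $q\geq1$,
$$\sup_{t\in[0,T]}\mE\interleave\bar\cZ_t(x,z)\interleave_1^q+\sup_{t\in[0,T]}\mE\interleave\cA\bar\cZ_t(x,z)\interleave_0^q\leq C_{T,q}\big(1+\interleave x\interleave_1^q+\interleave z\interleave_1^q\big).$$
In particular $\bar u(t,x,z):=\mE[\varphi(\bar\cZ_t(x,z))]$ is well-defined and bounded by $\|\varphi\|_\infty$.

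For the spatial regularity I would differentiate the system (\ref{spde20})--(\ref{zz0}) with respect to $x$ and $z$. Because (\ref{zz0}) is linear in $\bar\cZ$, the derivative $D_z\bar\cZ_t(x,z).h$ is independent of $z$ and solves the homogeneous linear equation $\dif(D_z\bar\cZ_t.h)=\cA(D_z\bar\cZ_t.h)\dif t+D_x\bar\cF(\bar X_t).(D_z\bar\cZ_t.h)\dif t$ with initial datum $h$, while $D_z^2\bar\cZ_t\equiv0$; the derivative $D_x\bar\cZ_t(x,z).h$ is affine in $z$ and solves an inhomogeneous linear equation whose forcing is built from the linearized flow $D_x\bar X_t(x).h$ (as in Lemma \ref{th47}), from $D_x^2\bar\cF(\bar X_t)$ and from $D_x\Sigma(\bar X_t)$, and analogously for the higher-order mixed derivatives. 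Using the boundedness of the derivatives of $\bar\cF$ and the regularity of $\Sigma$ one obtains uniform-in-time moment bounds for all these derivative processes up to order three — the ones involving at least one $z$-differentiation being bounded, the purely $x$-differentiated ones growing at most polynomially in $\interleave x\interleave_1$ and $\interleave z\interleave_1$ — and then the chain rule together with $\varphi\in\mC_b^3(\cH)$ yields $\bar u\in C_b^{1,3,3}([0,T]\times\cH\times\cH)$. Combined with the Markov property of $(\bar X_t,\bar\cZ_t)$ and It\^o's formula (applied after the Galerkin truncation of Subsection 4.1, to handle the unbounded operators $\cA$ and $A$), this shows that $\bar u$ solves the Kolmogorov equation (\ref{kez}) and that $\partial_t$ commutes with $D_x$ and $D_z$.

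Finally, for the mixed estimate (\ref{utzx}) I would not differentiate the representation $\mE[\varphi(\bar\cZ_t(x,z))]$ in $t$ directly, but instead use the equation $\partial_t\bar u=\bar\cL_1\bar u+\bar\cL_3\bar u$ from (\ref{kez}) together with the commutation of $\partial_t$ with $D_x,D_z$, so that $\partial_tD_z\bar u(t,x,z).h=D_z\big(\bar\cL_1\bar u+\bar\cL_3\bar u\big)(t,x,z).h$, and likewise for $\partial_tD_x$. Expanding the right-hand sides with the product rule produces finitely many terms of the schematic types $\<\cA h,D\bar u\>$ and $\<\cA h,D^2\bar u\>$, $\<\cA x,D^2\bar u.h\>$ and $\<\cA z,D^2\bar u.h\>$ together with their third-order analogues, $\<\bar\cF(x)\text{ or }D_x\bar\cF(x).z,\,D^2\bar u.h\>$ and $\<D_x\bar\cF(x).h\text{ or }D_x^2\bar\cF(x)(h,z),\,D\bar u\>$, plus trace terms built from $Q_1$ and from $\Sigma(x)\Sigma^*(x)$ and their $h$-derivatives. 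Estimating each term with the spatial-derivative bounds for $\bar u$ from the previous step, with the at most linear growth of $\bar\cF$ and of $x\mapsto D_x\bar\cF(x).z$ in $(\interleave x\interleave_1,\interleave z\interleave_1)$, and with $\interleave\cA h\interleave_0=\interleave h\interleave_1$ for $h\in\cH^1$, one arrives at exactly the right-hand side of (\ref{utzx}). I expect the main difficulty to be twofold: the careful bookkeeping needed so that differentiating the linear $\bar\cZ$-flow, whose coefficients depend on $\bar X_t(x)$, produces precisely the factors $1+\interleave x\interleave_1^2+\interleave z\interleave_1+\interleave\cA x\interleave_0+\interleave\cA z\interleave_0$ and no worse; and justifying the third-order spatial derivatives of $\bar u$ despite $\bar\cF$ and $\Sigma$ being only of class $C^2$ in $x$, which is handled exactly as in the proof of Lemma \ref{th47} in \cite{FWLL} (and as in \cite{RXY} for the deviation process), by expressing the top-order derivative through the Kolmogorov equation rather than by differentiating the mild solution three times.
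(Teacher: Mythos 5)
Your proposal follows essentially the same route as the paper: the existence and $C_b^{1,3,3}$ regularity of $\bar u$ are obtained (in the paper, by citing Br\'ehier's arguments) from the representation $\bar u(t,x,z)=\mE[\varphi(\bar\cZ_t(x,z))]$ and the smoothness of the linear flow, and the bound (\ref{utzx}) is derived exactly as you describe, by writing $\p_tD_z\bar u=D_z(\bar\cL_1+\bar\cL_3)\bar u$ (and similarly for $\p_tD_x\bar u$), expanding with the product rule, and estimating each term via the bounded derivatives of $\bar u$, the linear growth of $\bar\cF$, $D_x\bar\cF(x).z$ and $\Sigma(x)$, and $\interleave\cA h\interleave_0=\interleave h\interleave_1$. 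This matches the paper's proof, so no further comparison is needed.
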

\begin{proof}By using the same arguments as in \cite[Section  7]{Br2},  we can prove that equation (\ref{kez}) admits a solution $\bar u\in C_b^{1,3,3}([0,T]\times \cH\times \cH)$ which is given by (\ref{kol}), see also \cite[Section 4]{Br4}.
 Moreover, for $x,z,h\in \cH^1,$
\begin{align}\label{ptdz}
\partial_tD_z\bar u(t,x,z).h=D_z\partial_t\bar u(t,x,z).h=D_z(\bar\cL_1+\bar\cL_3)\bar u(t,x,z).h,
\end{align}
On the one hand, we have
\begin{align*}
&D_z\bar\cL_1\bar u(t,x,z).h\no\\&=D_zD_x\bar u(t,x,z).(\cA x+\bar \cF(x),h)+\frac{1}{2}\sum\limits_{n=1}^\infty \beta_{1,n}D_zD_x^2\bar u(t,x,z).(Be_{n},Be_{n},h),
\end{align*}
which together with $\bar u\in C_b^{1,3,3}([0,T]\times \cH\times \cH)$ yields that
\begin{align}\label{lxdz}
&|D_z\bar\cL_1\bar u(t,x,z).h|\leq C_1(1+\interleave\cA x\interleave_0+\interleave x\interleave_1)\interleave h\interleave_1.
\end{align}
On the other hand, we have
\begin{align*}
&D_z\bar\cL_3\bar u(t,x,z).h\\&=\<\cA h,D_z\bar u(t,x,z)\>_\cH+\<D_x\bar \cF( x).h,D_z\bar u(t,x,z))\>_\cH\no\\&+D_z^2\bar u(t,x,z).(\cA z+D_x\bar \cF(x).z,h)+\frac{1}{2}\sum\limits_{n=1}^{\infty} D_z^3\bar u(t,x,z).(\Sigma(x) e_{n},\Sigma(x) e_{n},h).
\end{align*}
Thus,
\begin{align}\label{lzdz}
&|D_z\bar\cL_3\bar u(t,x,z).h|\leq C_2(1+\interleave\cA z\interleave_0+\interleave z\interleave_1+\interleave x\interleave_1^2)(\interleave h\interleave_1+\interleave \cA h\interleave_0).
\end{align}
Combining (\ref{ptdz}), (\ref{lxdz}) and (\ref{lzdz}), we arrive at
\begin{align*}
	\vert \p_tD_z\bar u(t,x,z).h|\leq C_3\big(1&+\interleave x\interleave^2_1+\interleave z\interleave_1\\
	&+\interleave \cA x\interleave_0+\interleave \cA z\interleave_0\!\big)\!\big(\!\interleave h\interleave_1+\interleave \cA h\interleave_0\!\big).
	\end{align*}
Similarly, we have
\begin{align*}
&\partial_tD_x\bar u(t,x,z).h=D_x^2\bar u(t,x,z).(\cA x+\bar \cF(x),h)+\<\cA h+D_x\bar \cF(x).h,D_x\bar u(t,x,z)\>_{\cH}\\
&\qquad+\<D^2_x\bar \cF( x).(z,h),D_z\bar u(t,x,z))\>_\cH+D_xD_z\bar u(t,x,z).(\cA z+D_x\bar \cF(x).z,h)\\
&\qquad+\frac{1}{2}\sum\limits_{n=1}^\infty \beta_{1,n}D_x^3\bar u(t,x,z).(Be_{n},Be_{n},h)\\
&\qquad+\frac{1}{2}\sum\limits_{n=1}^{\infty} D_xD_z^2\bar u(t,x,z).(\Sigma(x) e_{n},\Sigma(x) e_{n},h)\\
&\qquad+\sum\limits_{n=1}^{\infty} D_z^2\bar u(t,x,z).(D_x(\Sigma(x)) e_{n},\Sigma(x) e_{n},h).
\end{align*}
By the same argument as above, we can obtain
\begin{align*}
	\vert \p_tD_x\bar u(t,x,z).h|\leq C_4\big(1&+\interleave x\interleave^2_1+\interleave z\interleave_1\\
	&+\interleave \cA x\interleave_0+\interleave \cA z\interleave_0\!\big)\!\big(\!\interleave h\interleave_1+\interleave \cA h\interleave_0\!\big),
	\end{align*}
which completes the proof.
\end{proof}

\subsection{Proof of Theorem \ref{main3}}
As before, we reduce the infinite dimensional problem to a finite dimensional one by the Galerkin approximation. Recall that $X_t^{n,\eps}$ and $\bar X_t^{n}$ are defined by (\ref{xyzn}) and (\ref{bxn}), respectively.
Define
$$
\cZ_t^{n,\eps}:=\frac{X_t^{n,\eps}-\bar X_t^{n}}{\sqrt{\eps}}.
$$
Then we have
\begin{align*}
&\dif \cZ_t^{n,\eps}=\cA \cZ_t^{n,\eps} \dif t+\eps^{-1/2}[\bar \cF_n(X_t^{n,\eps})-\bar \cF_n( \bar X_t^n)]\dif t+\eps^{-1/2}\delta \cF_n(X_t^{n,\eps},Y_t^{n,\eps})\dif t,
\end{align*}
where $\bar \cF_n$ is given by (\ref{Fn}), and $\delta \cF_n(x,y):=\cF_n(x,y)-\bar \cF_n(x)$.
Let $ \bar \cZ_t^n$ satisfy the following linear equation:
\begin{align*}
\dif \bar \cZ_t^n=\cA\bar \cZ_t^n\dif t+D_x\bar \cF_n(\bar X_t^n).\bar \cZ_t^n\dif t+P_n\Sigma(\bar X_t^n)\dif  W_t,
\end{align*}
where $ W_t$ is a cylindrical Wiener process in $H$, and $\Sigma(x)$ is defined by (\ref{sst1}).
As in \cite[Lemma 5.4]{RXY}, one can check that
\begin{align}\label{znz}
\lim\limits_{n\to\infty}\mE\Big(\interleave\cZ^\eps_t-\cZ_t^{n,\eps}\interleave_1+\interleave\bar \cZ_t-\bar \cZ_t^{n}\interleave_1\Big)=0.
\end{align}

For any $T>0$ and $\varphi\in \mC_b^3(\cH),$ we have for $t\in[0,T]$,
\begin{align}\label{wcz}
\left|\mE[\varphi(\cZ_t^{\eps})]-\mE[\varphi(\bar \cZ_t)]\right|&\leq \left|\mE[\varphi(\cZ_t^{\eps})]-\mE[\varphi(\cZ_t^{n,\eps})]\right|\no\\
&\quad+\left|\mE[\varphi(\cZ_t^{n,\eps})]-\mE[\varphi(\bar \cZ_t^n)]\right|+\left|\mE[\varphi(\bar \cZ_t^n)]-\mE[\varphi(\bar \cZ_t)]\right|.
\end{align}
  According to (\ref{znz}), the first and the last terms on the right-hand of (\ref{wcz}) converge to $0$ as $n\to\infty$ . Therefore,
in order to prove Theorem {\ref{main3}}, we only need to show that
\begin{align}\label{nzz}
\sup_{t\in[0,T]}\left|\mE[\varphi(\cZ_t^{n,\eps})]-\mE[\varphi(\bar \cZ_t^n)]\right|\leq C_T\,\eps^{\frac{1}{2}},
\end{align}
where {\bf $C_T>0$ is a constant independent of} $n.$ We shall only work with the approximating system in the following subsection, and proceed to prove
bounds that are uniform with respect to $n$. To simplify the notations, we shall omit the index $n$ as before.

Define
\begin{align}\label{333}
&\cL_3\varphi(z):=\cL_3(x,y,\bar x,z)\varphi(z):=\<\cA z,D_z\varphi(z)\>_{\cH}\\
&\,\,+\frac{1}{\sqrt{\eps}}\<\bar \cF(x)-\bar \cF(\bar x), D_z\varphi(z)\>_{\cH}+\frac{1}{\sqrt{\eps}}\<\delta \cF(x,y),D_z\varphi(z)\>_{\cH},\quad\forall\varphi\in C_{\ell}^1(\cH).\no
\end{align}
Given a function $\phi\in C_{\ell}^{1,2,\eta,2}([0,T]\times\cH\times H\times \cH)$ satisfying the centering condition:
\begin{align}\label{cen222}
\int_{H}\phi(t,x,y,z)\mu^x(\dif y)=0,\quad\forall t>0, x,z\in \cH,
\end{align}
let $\psi(t,x,y,z)$ solve the following Poisson equation
\begin{align}\label{psi1}
\cL_2(x,y)\psi(t,x,y,z)=-\phi(t,x,y,z),
\end{align}
where $t,x,z$ are regarded as parameters. Define
\begin{align}\label{ftp}
\overline{\delta \cF\cdot\nabla_z\psi}(t,x,z):=\int_{H}\nabla_z\psi(t,x,y,z).\delta \cF(x,y)\mu^x(\dif y).
\end{align}
We first establish the following weak fluctuation estimates for an appropriate integral functional of $(X_s^\eps,Y_s^{\eps},\cZ_s^{\eps})$ over the time interval $[0,t]$, which will play an important role in the proof of (\ref{nzz}).

\bl[Weak fluctuation estimates]\label{wfe2}
Let $T,\eta>0$, $x\in \cH^1$ and $y\in H$. Assume that ${\cF}\in C_b^{2,\eta}(\cH\times H,\cH^1)$ and $\cG\in C^{2,\eta}_B(\cH\times H,H).$ Then for any $t\in[0,T],$ $\phi\in C_{\ell}^{1,2,\eta,2}([0,T]\times\cH\times H\times \cH)$ satisfying (\ref{cen222}) and
 \begin{align}\label{as1}
|\p_t\phi(t,x,y,z)|&\leq C_0\big(1+\interleave x\interleave^2_1+\interleave z\interleave_1\no\\
&\qquad\quad+\interleave \cA x\interleave_0+\interleave \cA z\interleave_0\!\big)\big(1+\interleave x\interleave_1+\|y\|\big),
\end{align}
we have
\begin{align}
\mE\left(\int_0^t\phi(s,X_s^\eps,Y_s^{\eps},\cZ_s^{\eps})\dif s\right)\leq C_T\,\eps^{\frac{1}{2}},\label{we1}
\end{align}
and
\begin{align}
\mE\left(\frac{1}{\sqrt{\eps}}\int_0^t\phi(s,X_s^\eps,Y_s^{\eps},\cZ_s^{\eps})\dif s\right)\!-\!\mE\left(\int_0^t\overline{\delta \cF\cdot\nabla_z\psi}(s,X_s^{\eps},\cZ_s^{\eps})\dif s\right)\leq C_T\,\eps^{\frac{1}{2}},\label{we2}
\end{align}
where $C_T>0$ is a constant independent of $\eps,\eta$ and $n$.
\el
\begin{proof} The proof will be divided into two steps.

\vspace{1mm}
\noindent{\bf Step 1.}  We first prove estimate (\ref{we1}).
Applying It\^o's formula to $\psi(t,X_t^\eps,Y_t^{\eps},\cZ_t^\eps)$ and taking expectation, we have
\begin{align*}
\mE[\psi(t,X_t^\eps,Y_t^{\eps},\cZ_t^\eps)]&=\psi(0,x,y,0)+
\mE\left(\int_0^t(\partial_s+\mathcal{L}_1+\cL_3)\psi(s,X_s^\eps,Y_s^{\eps},\cZ_s^\eps)\dif s\right)\\
&\quad+\frac{1}{\eps}\mE\left(\int_0^t\mathcal{L}_2\psi(s,X_s^\eps,Y_s^{\eps},\cZ_s^\eps)\dif s\right),
\end{align*}
where $\cL_1$ and $\cL_3$ are defined by (\ref{L1}) and (\ref{333}), respectively.
Combining this with (\ref{psi1}), we obtain
\begin{align}\label{i}
&\mE\left(\int_0^t\phi(s,X_s^\eps,Y_s^{\eps},\cZ_s^{\eps})\dif s\right)
=\eps\mE\big[\psi(0,x,y,0)-\psi(t,X_t^\eps,Y_t^{\eps},\cZ_t^\eps)\big]\no\\
&\qquad+\eps\mE\left(\int_0^t\partial_s\psi(s,X_s^\eps,Y_s^{\eps},\cZ_s^{\eps})\dif s\right)+\eps\mE\left(\int_0^t\mathcal{L}_1\psi(s,X_s^\eps,Y_s^{\eps},\cZ_s^{\eps})\dif s\right)\no\\
&\qquad+\eps\mE\left(\int_0^t\mathcal{L}_3\psi(s,X_s^\eps,Y_s^{\eps},\cZ_s^{\eps})\dif s\right)=:\sum_{i=1}^4\sQ_i(t,\eps).
\end{align}
By Theorem \ref{PP} and Lemma {\ref{la41}}, we have
\begin{align*}
\sQ_1(t,\eps)\leq C_1\eps\mE\big(1+\interleave X_t^\eps\interleave_1+\|Y_t^{\eps}\|\big)\leq C_1\eps.
\end{align*}
For the second term, by using Theorem \ref{PP}, condition (\ref{as1}), Lemma \ref{la41}, (\ref{zne}) and (\ref{azne}), we get
\begin{align*}
\sQ_2(t,\eps)\leq C_2\bigg(\int_0^t\mE\big(1&+\interleave\cA X_s^\eps\interleave_0^2+\interleave\cA \cZ_s^\eps\interleave_0^2\\
&+\interleave X_s^\eps\interleave_1^4++\|Y_s^{\eps}\|^2+\interleave \cZ_s^\eps\interleave_1^2\big)\dif s\bigg)\leq C_2\,\eps.
\end{align*}
To treat the third term, since for each $t\in [0,T]$, $\phi(t,\cdot,\cdot,\cdot)\in C_{\ell}^{2,\eta,2}(\cH\times H\times\cH)$, by Theorem \ref{PP}, we have $\psi(t,\cdot,\cdot,\cdot)\in C_{\ell}^{2,2,2}(\cH\times H\times\cH)$, hence
\begin{align*}
|\cL_1\psi(t,X_t^\eps,Y_t^{\eps},\cZ_t^\eps)|&\leq |\langle \cA X_t^\eps+\cF(X_t^\eps,Y_t^\eps),D_x\psi(t,X_t^\eps,Y_t^{\eps},\cZ_t^\eps)\rangle_\cH|
\\&\quad+\frac{1}{2}
Tr((BQ_1^{\frac{1}{2}})(BQ_1^{\frac{1}{2}})^*)\|D^2_{x}\psi(t,X_t^\eps,Y_t^{\eps},\cZ_t^\eps)\|_{\sL(\cH\times\cH)}\\&
\leq C_3\big(1+\interleave\cA X_t^\eps\interleave_0^2+\interleave X_t^\eps\interleave_1^2+\|Y_t^{\eps}\|^2\big).
\end{align*}
As a result of  Lemmas \ref{la41} and \ref{la44}, we deduce that
\begin{align*}
\sQ_3(t,\eps)&\leq C_3\,\eps\mE\left(\int_0^t\Big(\interleave\cA X_s^\eps\interleave_0^2+\interleave X_s^\eps\interleave_1^2+\|Y_s^{\eps}\|^2\Big)\dif s\right)\leq C_3\,\eps.
\end{align*}
For the last term, we have
\begin{align*}
\sQ_{4}(t,\eps)&=\eps\mE\left(\int_0^t\<\cA \cZ_s^\eps,D_z\psi(s,X_s^\eps,Y_s^{\eps},\cZ_s^\eps)\>_\cH\dif s\right)\\&+\sqrt{\eps}\mE\left(\int_0^t\< \bar \cF(X_s^\eps)-\bar \cF(\bar X_s),D_z\psi(s,X_s^\eps,Y_s^{\eps},\cZ_s^\eps)\>_\cH\dif s\right)\\
&+\sqrt{\eps}\mE\left(\int_0^t\<\delta \cF(X_s^\eps,Y_s^{\eps}),D_z\psi(s,X_s^\eps,Y_s^{\eps},\cZ_s^\eps)\>_\cH\dif s\right)
.
\end{align*}
It follows from (\ref{azne}) and Lemma \ref{la41} again that
\begin{align*}
\sQ_{4}(t,\eps)&\leq C_4\sqrt{\eps}\mE\left(\int_0^t(1+\interleave \cA \cZ_s^\eps\interleave_0^2+\interleave X_s^\eps\interleave_1^2+\|Y_s^{\eps}\|^2)\dif s\right)
\leq C_4 \sqrt{\eps}.
\end{align*}
Combining the above inequalities with (\ref{i}), we get the desired result.

\vspace{1mm}
\noindent{\bf Step 2.} We proceed to prove estimate (\ref{we2}). By following exactly the same arguments as in  the proof of Step 1, we get that
\begin{align*}
&\mE\left(\frac{1}{\sqrt{\eps}}\int_0^t\phi(s,X_s^\eps,Y_s^{\eps},\cZ_s^{\eps})\dif s\right)\leq C_0\sqrt{\eps}+\sqrt{\eps}\mE\left(\int_0^t\mathcal{L}_3\psi(s,X_s^\eps,Y_s^{\eps},\cZ_s^{\eps})\dif s\right).
\end{align*}
For the last term, by definition (\ref{333}) we have
\begin{align*}
&\sqrt{\eps}\mE\left(\int_0^t\mathcal{L}_3\psi(s,X_s^\eps,Y_s^{\eps},\cZ_s^{\eps})\dif s\right)\\&=\sqrt{\eps}\mE\left(\int_0^t\<\cA \cZ_s^\eps,D_z\psi(s,X_s^\eps,Y_s^{\eps},\cZ_s^\eps)\>_\cH\dif s\right)\\
&+\mE\left(\int_0^t\< \bar \cF(X_s^\eps)-\bar \cF(\bar X_s),D_z\psi(s,X_s^\eps,Y_s^{\eps},\cZ_s^\eps)\>_\cH\dif s\right)\\
&+\mE\left(\int_0^t\<\delta \cF(X_s^\eps,Y_s^{\eps}),D_z\psi(s,X_s^\eps,Y_s^{\eps},\cZ_s^\eps)\>_\cH\dif s\right)
=:\sum\limits_{i=1}^3\sT_{i}(t,\eps).
\end{align*}
Using Lemma \ref{la41} and (\ref{azne}), we get
\begin{align*}
\sT_{1}(t,\eps)\leq C_1\sqrt{\eps}\mE\left(\int_0^t\interleave \cA \cZ_s^\eps\interleave_0(1+\interleave X_s^\eps\interleave_1+\|Y_s^{\eps}\|)\dif s\right)
\leq C_1 \sqrt{\eps}.
\end{align*}
According to H\"older's inequality, Lemma \ref{la41} and Theorem \ref{main1}, we have
\begin{align*}
\sT_{2}(t,\eps)\leq C_2\int_0^t\big(\mE\interleave X_s^{\eps}-\bar X_s\interleave_1^2\big)^{1/2}\big(1+\mE|\| X_s^\eps|\|_1^2+\mE\| Y_s^{\eps}\|^{2}\big)^{1/2}    \dif s\leq C_2\,\sqrt{\eps}.
\end{align*}
Thus, we deduce that
\begin{align*}
&\mE\left(\frac{1}{\sqrt{\eps}}\int_0^t\phi(s,X_s^\eps,Y_s^{\eps},\cZ_s^{\eps})\dif s\right)-\mE\left(\int_0^t\overline{\delta \cF\cdot\nabla_z\psi}(s,X_s^{\eps},\cZ_s^{\eps})\dif s\right)\no\\
&\leq\! C_3 \sqrt{\eps}+\!\mE\left(\int_0^t\!\!\big(\<\delta \cF(X_s^\eps,Y_s^{\eps}),D_z\psi(s,X_s^\eps,Y_s^{\eps},\cZ_s^\eps)\>_\cH-\overline{\delta \cF\cdot\nabla_z\psi}(s,X_s^{\eps},\cZ_s^{\eps})\big)\dif s\!\right),
\end{align*}
where $\overline{\delta \cF\cdot\nabla_z\psi}$ is defined by (\ref{ftp}). Note that the function
$$
\tilde \phi(t,x,y,z):=\<\delta \cF(x,y),D_z\psi(t,x,y,z)\>_1-\overline{\delta \cF\cdot\nabla_z\psi}(t,x,z)
$$
satisfies the centering condition (\ref{cen222}) and condition (\ref{as1}). Thus, using (\ref{we1}) directly, we obtain
$$\mE\left(\int_0^t\big(\<\delta \cF(X_s^\eps,Y_s^{\eps}),D_z\psi(s,X_s^\eps,Y_s^{\eps},\cZ_s^\eps)\>_\cH-\overline{\delta \cF\cdot\nabla_z\psi}(s,X_s^{\eps},\cZ_s^{\eps})\big)\dif s\right)\leq C_4\,\sqrt{\eps},$$
which completes the proof.
\end{proof}

Now, we are in the position to give:
\begin{proof}[Proof of estimate (\ref{nzz})]
 Fix $T>0.$ For any  $t\in[0,T]$ and $x,z\in \cH^1$, let
$$
\tilde u(t,x,z)=\bar u(T-t,x,z).
$$
It is easy to check that
$$
\tilde u(0,x,0)=\bar u(T,x,0)=\mE[\varphi(\bar \cZ_T)]\quad\text{and}\quad\tilde u(T,x,z)=\bar u(0,x,z)=\varphi(z).
$$
Applying It\^o's formula, by (\ref{kez}) we have
\begin{align*}
&\mE[\varphi(\cZ_T^{\eps})]-\mE[\varphi(\bar \cZ_T)]=\mE[\tilde u(T,\bar X_T,\cZ_T^{\eps})-\tilde u(0,x,0)]\\&
=\mE\left(\int_0^T\big(\p_t+\cL_1+\cL_3\big)\tilde u(t,X_t^{\eps},\cZ_t^{\eps})\dif t\right)\no\\
&=\mE\left(\int_0^T(\mathcal{L}_1-\mathcal{\bar L}_1)\tilde u(t,X_t^{\eps},\cZ_t^{\eps})\dif t\right)+\mE\left(\int_0^T(\mathcal{L}_3-\mathcal{\bar L}_3)\tilde u(t,X_t^{\eps},\cZ_t^{\eps})\dif t\right)\no\\
&=\mE\left(\int_0^T\langle \cF(X_t^\eps,Y_t^\eps)-\bar \cF(X_t^\eps), D_x\tilde u(t,X_t^{\eps},\cZ_t^{\eps}))\rangle_\cH \dif t\right)\\
&\quad+\mE\left(\int_0^T\Big\langle \frac{\bar \cF(X_t^\eps)-\bar \cF(\bar X_t)}{\sqrt{\eps}}-D_x\bar \cF( X_t^\eps) .\cZ_t^\eps, D_z\tilde u(t,X_t^{\eps},\cZ_t^{\eps}))\Big\rangle_\cH \dif t\right)\\
&\quad+\bigg[\mE\left(\frac{1}{\sqrt{\eps}}\int_0^T\langle \cF(X_t^\eps,Y_t^\eps)-\bar \cF(X_t^\eps), D_z\tilde u(t,X_t^{\eps},\cZ_t^{\eps}))\rangle_\cH \dif t\right)\\
&\qquad\qquad-\frac{1}{2}\mE\left(\int_0^T  Tr(D^2_{z}\tilde u(t,X_t^{\eps},\cZ_t^{\eps})\Sigma( X_t^\eps)\Sigma( X_t^\eps)^{*})\dif t\right)\bigg]:=\sum\limits_{i=1}^3\sN_i(T,\eps).
\end{align*}
 For the first term, recall that $\tilde\Psi$  solves the Poisson equation (\ref{poF1}) and define
$$
\psi(t,x,y,z):=\<\tilde\Psi(x,y),D_x\tilde u(t,x,z)\>_\cH.
$$
Since $\cL_2$ is an operator with respect to the $y$-variable, one can check that $\psi$ solves the following Poisson equation:
\begin{align*}
\mathcal{L}_2(x,y)\psi(t,x,y,z)=-\langle \delta \cF(x,y),D_x\tilde u(t,x,z)\rangle_\cH=:-\phi(t,x,y,z).
\end{align*}
It is obvious that $\phi$ satisfies the centering condition (\ref{cen222}). 	Furthermore,
by (\ref{utzx}) we get
\begin{align*}
|\p_t\phi(t,x,y,z)|&=\left|\<\delta \cF(x,y), \p_tD_{x}\bar u(T-t,x,z)\>_\cH\right|\\
&\leq C_1\big(1+\interleave x\interleave^2_1+\interleave z\interleave_1+\interleave \cA x\interleave_0+\interleave \cA z\interleave_0\big)\\
&\qquad\times\big(\interleave \delta \cF(x,y)\interleave_1+\interleave \cA \delta \cF(x,y)\interleave_0\big)\\
&\leq C_1\big(1+\interleave x\interleave^2_1+\interleave z\interleave_1+\interleave \cA x\interleave_0+\interleave \cA z\interleave_0\!\big)\big(1+\interleave x\interleave_1+\|y\|\big).
\end{align*}
Thus, it follows from (\ref{we1}) directly that
$$\sN_1(T,\eps)\leq C_1\sqrt{\eps}.
$$
To control the second term, by the mean value theorem, H\"older's inequality, Lemma \ref{bure}, Theorem \ref{main1} and (\ref{zne}) we deduce that for $\vartheta\in(0,1),$
\begin{align*}
\sN_2(T,\eps)&\leq\mE\bigg(\int_0^T\big|\big\langle[D_x\bar \cF(X_t^\eps+\vartheta(X_t^\eps-\bar X_t))\\
&\qquad\qquad\qquad-D_x\bar \cF( X_t^\eps) ].\cZ_t^\eps, D_z\tilde u(t,X_t^{\eps},\cZ_t^{\eps})\big\rangle_\cH\big|\dif t\bigg)\\
&\leq C_2\int_0^T\big(\mE\interleave X_t^\eps-\bar X_t\interleave _1^2\big)^{1/2}\big(\mE\interleave \cZ_t^\eps\interleave_1^2\big)^{1/2}\dif t\leq C_2\sqrt{\eps}.
\end{align*}
 For the last term, define
$$
\hat\psi(t,x,y,z):=\<\tilde\Psi(x,y),D_z\tilde u(t,x,z)\>_\cH.
$$
Then $\hat\psi$ solves the Poisson equation
\begin{align*}
\mathcal{L}_2(x,y)\hat\psi(t,x,y,z)=-\langle \delta \cF(x,y),D_z\tilde u(t,x,z)\rangle_\cH=:-\hat\phi(t,x,y,z).
\end{align*}
By exactly the same arguments as above, we have that $\hat\phi$ satisfies the centering condition (\ref{cen222}) and  condition (\ref{as1}). 	
Furthermore, by the definition of $\Sigma$ in (\ref{sst1}), we have
\begin{align*}
&\overline{\delta \cF\cdot\nabla_z\hat\psi}(t,x,z)=\int_{H}D_z\hat\psi(t,x,y,z).\delta \cF(x,y)\mu^x(\dif y)\\&=\int_{H}D_z^2\tilde u(t,x,z).(\tilde\Psi(x,y),\delta \cF(x,y))\mu^x(\dif y)=\frac{1}{2}Tr(D_z^2\tilde u(t,x,z)\Sigma(x)\Sigma^*(x)).
\end{align*}
Thus, it follows by (\ref{we2}) directly that
$$
\sN_3(T,\eps)\leq C_3\,\sqrt{\eps}.
$$
Combining the above computations, we get the desired result.
\end{proof}

\bigskip
\bibliographystyle{amsplain}

\end{document}